\documentclass[reqno]{amsart}
\usepackage[colorlinks=true, pdfstartview=FitV, linkcolor=blue, citecolor=blue]{hyperref}
\usepackage{amssymb,amsmath,amsfonts}
\usepackage{bm}
\usepackage{enumitem}
\usepackage{array}
\usepackage{graphicx}
\allowdisplaybreaks

\newcommand{\N}{\mathbb{N}}  \newcommand{\R}{\mathbb{R}}  
\newcommand{\Z}{\mathbb{Z}}  \newcommand{\Q}{\mathbb{Q}}  
\newcommand{\rar}{\rightarrow}
\newcommand{\mc}{\mathcal}   \newcommand{\mr}{\mathrm}
\newcommand{\balph}{{\bm{\alpha}}}
\newcommand{\bn}{{\bm{n}}}

\newtheorem*{theorem*}{Theorem}
\newtheorem{theorem}{Theorem}
\newtheorem{lemma}[theorem]{Lemma}
\newtheorem{corollary}[theorem]{Corollary}
\newtheorem{proposition}[theorem]{Proposition}

\begin{document}
\title[Non-rectifiable cut-and-project sets]{A cut-and-project set which is not bi-Lipschitz equivalent to a lattice}
\author{Alan Haynes}

\subjclass[2020]{52C23; 51F30; 11J71}

\keywords{Cut-and-project sets, model sets, Delone sets, separated nets, bi-Lipschitz equivalence}

\begin{abstract}
We construct a cut-and-project set in a totally irrational plane in $\R^3$, with a compact window which is the closure of its interior and has countable boundary, which is not bi-Lipschitz equivalent to a lattice. This answers a question of Burago and Kleiner about cut-and-project sets, and shows in particular that the hypothesis of boundary measure zero is not enough to guarantee bi-Lipschitz equivalence to a lattice. Since our set has uniform density, it also shows that a non-rectifiable Delone set need not encode a non-realisable density, answering a question of Bhat and Dymond.
\end{abstract}

\maketitle

\section{Introduction}\label{sec.Intro}

A Delone set in $\R^d$ is said to be \textit{rectifiable} if it is bi-Lipschitz (BL) equivalent to $\Z^d$, that is, if there is a bijection from the set to $\Z^d$ which is Lipschitz and has a Lipschitz inverse. Mikhael Gromov \cite{Grom1993}, motivated by quasi-isometries of groups, and Hillel Furstenberg, motivated by Kakutani equivalence of $\R^d$-actions, asked whether every Delone set in $\R^d$ is rectifiable (see \cite[Section 3]{SmilSolo2026} for the history). For $d=1$ the answer is yes, for elementary reasons. In 1998 Dmitri Burago and Bruce Kleiner \cite{BuraKlei1998} and, independently, Curtis McMullen \cite{McMu1998} showed that for $d\ge 2$ the answer is no. Both constructions begin with a bounded density $\rho$ on the unit square which is not the Jacobian of any BL homeomorphism, and then build a Delone set whose rescaled restrictions to a sequence of larger and larger squares converge to $\rho$.

Sets produced in this way are deliberately irregular, and much of the work since then has concerned Delone sets coming from aperiodic order, which are described by a finite amount of data. Every Delone set arising from a primitive substitution tiling is rectifiable \cite{Solo2011}, and so is every linearly repetitive Delone set \cite{AlisCoroGamb2013}. On the other hand, Cortez and Navas \cite{CortNava2016} constructed repetitive subsets of $\Z^2$ which are not rectifiable, and Bhat and Dymond \cite{BhatDymo2026} found such sets with close to optimal repetitivity functions. In both cases non-rectifiability comes from a version of the Burago--Kleiner encoding of a non-realisable density.

The question of whether every cut-and-project set (also called a model set) is BL equivalent to a lattice has remained open. Burago and Kleiner \cite[Introduction, Problem 3]{BuraKlei2002} considered the following construction: let $P\subseteq\R^n$ be a $2$-plane with irrational slope, let $B\subseteq\R^n$ be a bounded set with nonempty interior, and project orthogonally to $P$ the set of points $z\in\Z^n$ for which $(z+B)\cap P\not=\emptyset$. They asked whether the resulting net is always BL equivalent to $\Z^2$, and the proof of \cite[Theorem 1.2]{BuraKlei2002} settles the case $n=3$ for planes with `Diophantine slope' (condition (1.1) of \cite{BuraKlei2002}), under the condition that the orthogonal projection of $B$ to the normal line of $P$ is a finite union of intervals. We note that there appears to be a discrepancy between what is claimed in \cite{BuraKlei2002} and what follows from the machinery of the proof: their theorem is stated for arbitrary bounded $B$ with nonempty interior, but the proof uses the fact that the set of lattice points which are projected is the intersection of $\Z^3$ with a slab parallel to $P$. This is the case when the projection of $B$ to the normal line of $P$ is an interval, as happens for example when $B$ is connected. The density estimates in the proof are additive, and uniform in the position and width of the slab, so the same argument applies to the intersection of $\Z^3$ with a finite union of such slabs. We point this out because our example falls within the scope of the theorem as stated (see the discussion after Theorem \ref{thm.Main}).

Haynes, Kelly, and Weiss \cite[Theorem 1.1]{HaynKellWeis2014} showed, in all dimensions, that for almost every subspace the answer to the BL equivalence question is yes, whenever the boundary of the window has upper Minkowski dimension smaller than the dimension of the internal space. Later, Koivusalo and Lagac\'{e} \cite{KoivLaga2025} found sharp bounds for the discrepancy of cut-and-project sets, which indicate that this is close to the largest class to which the known sufficient criteria for rectifiability apply.

The BL problem for cut and project sets appears again, for windows which are bounded, have nonempty interior, and have boundary of measure zero, as Problem 2.8.2 in \cite{AdicEtal2016}, where it is pointed out that if the window is only required to be bounded then it is easy to produce non-rectifiable examples. Most recently it is listed by Smilansky and Solomon as Open Problem 7.2 in their survey \cite{SmilSolo2026}. To our knowledge, no example of a non-rectifiable cut-and-project set with window satisfying these conditions was previously known.

To state our main result, for $\balph=(\alpha_1,\alpha_2)\in\R^2$ let
\[
V_\balph=\{z\in\R^3 : z_3=\alpha_1z_1+\alpha_2z_2\},\qquad \bn_\balph=(\alpha_1,\alpha_2,-1),
\]
and let $\pi_\balph$ denote the orthogonal projection from $\R^3$ onto $V_\balph$. For a bounded set $W\subseteq\R$ we write
\begin{equation}\label{eqn.CPSDef}
Y_\balph(W)=\left\{\pi_\balph(z) : z\in\Z^3,~\langle z,\bn_\balph\rangle\in W\right\}.
\end{equation}
This is the cut-and-project set with physical space $V_\balph$, internal space $V_\balph^\perp$ (identified with $\R$ by the map $w\mapsto\langle w,\bn_\balph\rangle$), lattice $\Z^3$, and window $W$.

\begin{theorem}\label{thm.Main}
There exist $\alpha_1,\alpha_2\in\R$, with $1,\alpha_1,$ and $\alpha_2$ linearly independent over $\Q$, and a compact set $W\subseteq\R$, which is the closure of its interior and has countable boundary, with the following properties:
\begin{itemize}[itemsep=5bp,parsep=10bp, topsep=0bp]
\item[(i)] There is no $z\in\Z^3$ with $\langle z,\bn_\balph\rangle\in\partial W$, and
\item[(ii)] The set $Y_\balph(W)$ is a Delone set in $V_\balph$ which is not BL equivalent to any lattice in $V_\balph$.
\end{itemize}
\end{theorem}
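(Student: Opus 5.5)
The strategy is to encode a Burago--Kleiner/McMullen non-realisable density into $Y_\balph(W)$. The window $W$ will be a union of countably many closed intervals accumulating at a single point, chosen so that the density of $Y_\balph(W)$ fluctuates in a controlled way on a sequence of rapidly growing squares. Let $\Lambda=\{\langle z,\bn_\balph\rangle : z\in\Z^3\}$. For $y=\pi_\balph(z)$ the internal coordinate of $y$ is the value $\alpha_1z_1+\alpha_2z_2-z_3$. So the points of $Y_\balph(W)$ lying over a region $R\subseteq V_\balph$ are counted by the visits of the $\Z^2$-rotation $(z_1,z_2)\mapsto \alpha_1z_1+\alpha_2z_2 \bmod 1$ to $W$, over lattice points $(z_1,z_2)$ in the projection of $R$. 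The idea is to choose $\balph$ \emph{Liouville-like}, with very good simultaneous rational approximations $\balph\approx (p_k/q_k,p'_k/q_k)$. On squares of side $L_k$ with $q_k\ll L_k$ and $L_k|q_k\balph-(p_k,p'_k)|$ tiny, the rotation then looks periodic with period $q_k$. The count of points in a subsquare is then governed by how $W$ sits relative to the grid $\frac1{q_k}\Z$ shifted by an amount that drifts slowly, and linearly, across the square.

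Step 1: fix a non-realisable density $\rho$ on $[0,1]^2$ with $1\le\rho\le 2$. Following Burago--Kleiner, take $\rho$ piecewise constant on stripes, since a piecewise-constant $\rho$ depending on one linear coordinate suffices for non-realisability in their construction. Also fix their quantitative criterion: if a Delone set has, at infinitely many scales $L_k$, rescaled counting measures on $[0,L_k]^2$ within $\epsilon$ of $\rho$ (in a suitable discrepancy sense, uniformly on subsquares down to a fixed mesh), then it is not BL equivalent to $\Z^2$. I will assume this result from the cited literature in the form used by Cortez--Navas.

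Step 2: choose inductively $q_k$, $\balph$ (as the limit of the approximations, with $1,\alpha_1,\alpha_2$ kept $\Q$-independent by perturbing at each stage), and intervals $I_k\subseteq W$ of length $\ll 1/q_k$ accumulating at $0$. Near stage $k$, the drift of $\langle z,\bn_\balph\rangle$ modulo $1/q_k$ along one direction of the square of side $L_k$ is linear. The piece of $W$ newly added at stage $k$ is placed in a gap of $W_{<k}$ at distance about $2^{-k}$ from $0$. Its intersection pattern with the drifting grid then produces, on that square, an extra density equal to the desired stripe profile $\rho-1$ to within $\epsilon_k$. A fixed base interval $[a,b]$ supplies density $1$ and keeps the set relatively dense. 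The countably many other pieces contribute a total of $\sum_{j\ne k}$ (error) $\le\epsilon_k$ on this square. The earlier pieces contribute essentially uniform density, because $L_k$ is huge compared with their scale. The later pieces contribute negligibly, because the $q_j$ grow so fast that by equidistribution over the $q_j$-periodic pattern they are short relative to $L_k$. Taking $W$ to be the closure of the union of these pieces gives a compact set equal to the closure of its interior, whose boundary is the countable set of endpoints together with $\{0\}$. The endpoints are chosen outside the countable set $\Lambda$, which is possible since they are determined from an uncountable range of choices, and we also arrange $0\notin W$'s boundary hits, e.g.\ by accumulating at a point outside $\Lambda$ instead of at $0$. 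This gives (i). The Delone property follows from the base interval, which gives relative density, and from the finiteness of $\Lambda\cap(\text{bounded set})$ restricted to a fixed $z$-window, which gives uniform discreteness.

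Step 3: conclude (ii) from Step 1. Any two lattices in $V_\balph$ are BL equivalent, so non-rectifiability suffices. The main obstacle is Step 2: the perturbation of $\balph$ at later stages must not destroy the density pattern established at earlier scales, and the errors from infinitely many window pieces must be made summable uniformly on subsquares. I would handle this with explicit discrepancy bounds for the nearly periodic rotation, with $|\balph-\balph_k|\le L_k^{-2}q_k^{-1}\epsilon_k$ forced at each stage.
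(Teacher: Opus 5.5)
There is a genuine gap: your strategy cannot work for any window the statement allows. Since $\partial W$ is countable it has measure zero. Since $\alpha_1\notin\Q$, a Weyl estimate along horizontal rows applies; it is uniform in the starting point, and you apply it to finite unions of arcs squeezing $W$ from inside and outside. It gives, for every $\epsilon>0$, an $r$ such that every convex $K\subseteq\R^2$ containing a disc of radius $r$ satisfies $\#\{m\in\Z^2\cap K:\langle\balph,m\rangle\in W+\Z\}=|W||K|\pm\epsilon|K|$, whatever $\alpha_2$ is. Transferring this to $Y_\balph(W)$ (bounded displacement plus a linear change of coordinates) shows that all rescaled counting measures on squares of side $L_k\to\infty$ converge to a constant density. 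This is the content of Corollary~\ref{cor.Encode}, and it holds for every regular model set. So the encoding criterion in your Step~1 is never triggered, whatever Liouville properties $\balph$ has.

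Concretely, Step~2 breaks where you claim the stage-$k$ piece adds an extra density $\rho-1$ of order one. That piece lies in an arc whose length tends to $0$, so the density fluctuation it can create on a large square is at most about that length. In your own near-periodic picture, an interval of length $\ll 1/q_k$ contains at most one point of each shifted grid $\frac{1}{q_k}\Z+\theta$, so it contributes density $0$ or about $1/q_k$.

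Step~1 is also wrong on its own terms. If $\rho(u)=\phi(\langle u,v\rangle)$ with $|v|=1$ and $\phi$ bounded between positive constants, then $\rho$ is the Jacobian of the BL map $u\mapsto\Phi(\langle u,v\rangle)v+\langle u,v'\rangle v'$, where $\Phi'=\phi$ and $v'$ is a unit vector orthogonal to $v$. So stripe densities are realisable. Moreover, a single near-period $q_k$ only produces densities varying along the one linear form $\langle\balph-(p_k,p_k')/q_k,m\rangle$, never the two-directional checkerboards a Burago--Kleiner obstruction needs.

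What is missing is an obstruction that works at one finite scale, with a contrast allowed to tend to zero. The paper replaces the Burago--Kleiner Jacobian hypothesis by an area-gap hypothesis: images of adjacent squares of a checkerboard differ in area by $c|S|$. It proves the stretched-pair lemma under this hypothesis and iterates it through a hierarchical configuration $\mc{C}(M,N,J)$, in which the level-$(j+1)$ checkerboards sit along the rows of level $j$. A compactness argument over varying sets then gives a scale $s_*(L,c,N,J,b)$. Beyond it, count gaps of $c|S|$ on adjacent squares of $x+s\,\mc{C}$ rule out $L$-BL bijections to $\Z^2$ for every set of covering radius at most $b$ (Proposition~\ref{prop.Finite}).

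Each $L$ then gets its own hierarchy: an arc $A_L$ of length $a_L$, giving contrast $c_L=a_L/2\to 0$, intersected with stages $\{x:\|Q_vx-\gamma_v\|<w_v/2\}$. Via $Q_v\balph=\bm{\mu}_v+\bm{\omega}_v$, these stages cut out strip families with prescribed normals: vertical stripes and horizontal bands at every level. The number $\alpha_2$ is built by nested intervals so that all the $\bm{\omega}_v$ meet their tolerances at once. Since ruling out $L$-BL bijections for a fixed $L$ needs only one configuration at one scale, the vanishing contrast costs nothing.

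Your non-interference heuristics (earlier pieces look uniform, later pieces are negligible) do match how the paper controls the other hierarchies. It uses a row-count bound depending only on $\alpha_1$ and the number of arcs, and confines later pieces to an arc of length $a_L/100$. But these heuristics must feed this finite-scale argument, not a limiting non-realisable density.
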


Since $1,\alpha_1,$ and $\alpha_2$ are linearly independent over $\Q$, the plane $V_\balph$ is totally irrational in each of the senses in use: it contains no nonzero rational vector, its orthogonal complement contains no nonzero integer vector% (if $t\bn_\balph\in\Z^3$ then $t\in\Z$, from the third coordinate, and $t\alpha_1\in\Z$, so $t=0$)
, and its image in $\R^3/\Z^3$ is dense. Consequently $\pi_\balph$ is injective on $\Z^3$, and, since $\alpha_1$ is irrational, the internal projection of $\Z^3$, which under our identification is the set $\Z\alpha_1+\Z\alpha_2+\Z$, is dense in $\R$. The window is compact, is the closure of its interior, and has boundary of measure zero, and by part (i) no lattice point projects to its boundary. In the terminology of \cite{BaakGrim2013}, $Y_\balph(W)$ is a regular model set. In particular it satisfies all of the hypotheses of \cite[Problem 2.8.2]{AdicEtal2016}, so Theorem \ref{thm.Main} gives a negative answer to that problem when the ambient space has dimension $3$ and the physical space has dimension $2$. It also answers Open Problem 7.2 of \cite{SmilSolo2026}. To see that it answers Burago and Kleiner's question for $n=3$, take $B=\{-s\bn_\balph/|\bn_\balph|^2 : s\in W\}+D$, where $D$ is the closed unit disc in $V_\balph$ centered at $0$. This is a compact set with nonempty interior, and for $z\in\Z^3$ we have that $(z+B)\cap V_\balph\not=\emptyset$ if and only if $\langle z,\bn_\balph\rangle\in W$, so the net constructed from $B$ is $Y_\balph(W)$. Under the identification of $V_\balph^\perp$ with $\R$ above, the orthogonal projection of this set $B$ to the normal line of $V_\balph$ is $\{-s:s\in W\}$, which has infinitely many components (this is established in Section \ref{sec.Proof}, at the beginning of the proof of part (ii) of Theorem \ref{thm.Plane}).

Our construction works for every irrational $\alpha_1$, and in particular we may take $\alpha_1=\sqrt{2}$, which satisfies condition (1.1) of \cite{BuraKlei2002} (in our notation, condition (1.1) concerns $\alpha_1$ only, and \cite[Theorem 1.2]{BuraKlei2002} imposes no condition on $\alpha_2$). Therefore Theorem \ref{thm.Main} also shows that the property of $B$ used in the proof of \cite[Theorem 1.2]{BuraKlei2002}, that its projection to the normal line of $P$ is a finite union of intervals, cannot be dropped.

The window in Theorem \ref{thm.Main} cannot be replaced by an interval. In a companion paper \cite{Hayn2026} we show that if $W$ is an interval of length less than $1$, and if $\alpha_1$ and $\alpha_2$ are not both rational, then $Y_\balph(W)$ is BL equivalent to $\Z^2$. We do not know whether the window can be replaced by a finite union of intervals. By the discussion above this is impossible when $\alpha_1$ satisfies condition (1.1) of \cite{BuraKlei2002}, so any such example would have to use an $\alpha_1$ which does not.

Theorem \ref{thm.Main} will be deduced in Section \ref{sec.NormalForm} from Theorem \ref{thm.Plane}, which is a statement about the set of integer points $m\in\Z^2$ for which $\alpha_1m_1+\alpha_2m_2$ lies in $W+\Z$. Working with this subset of $\Z^2$ has the advantage that the internal coordinate of each point is given by an explicit linear form, and all of the constructions in the paper are carried out there. The idea of passing to such a set already appears in the author's earlier paper \cite{Hayn2016} on codimension one cut-and-project sets. There, after moving the window along the physical space to a segment parallel to a coordinate axis, the problem was reduced to one about a set of integer points of exactly this form, which was then analyzed one row at a time, through the return times of an irrational rotation to the window.

Our set also has a property which bears on a different question. Following \cite{BhatDymo2026} (see also \cite[Section 3.3]{SmilSolo2026}), a measurable function $\rho:[0,1]^2\rar(0,\infty)$ is \textit{non-realisable} if there is no map $\Phi:[0,1]^2\rar\R^2$, which is BL onto its image, with $|\det D\Phi|=\rho$ almost everywhere. A Delone set $\Lambda\subseteq\R^2$ \textit{encodes} $\rho$ if there are numbers $r_k\rar\infty$, closed squares $Q_k\subseteq\R^2$ of side length $r_k$, and affine bijections $\varphi_k:[0,1]^2\rar Q_k$ such that the measures
\[
\mu_k=\frac{1}{r_k^2}\sum_{x\in\Lambda\cap Q_k}\delta_{\varphi_k^{-1}(x)}
\]
converge weakly to $\rho(u)\,\mr{d}u$. A Delone set which encodes a non-realisable density is not rectifiable (this is the content of \cite[Lemma 2.1]{BuraKlei1998}; see \cite[Proposition 3.13]{SmilSolo2026}), and this is the mechanism behind all of the examples of non-rectifiable Delone sets mentioned above. Bhat and Dymond \cite{BhatDymo2026} asked whether the converse holds, that is, whether every non-rectifiable Delone set encodes a non-realisable density, and this question is recorded as Open Problem 3.14 in \cite{SmilSolo2026}. In what follows, the area of a square in $V_\balph$ is measured with respect to the Euclidean metric inherited from $\R^3$, and $V_\balph$ is identified with $\R^2$ by any linear isometry.

\begin{corollary}\label{cor.Encode}
Let $\balph$ and $W$ be as in Theorem \ref{thm.Main}, and let $\delta=|W|/|\bn_\balph|$. Then, for every $\epsilon>0$, there is an $R_\epsilon>0$ such that every closed square $Q\subseteq V_\balph$ of side length $R\ge R_\epsilon$ satisfies
\[
\left|\#\left(Y_\balph(W)\cap Q\right)-\delta R^2\right|\le\epsilon R^2.
\]
Consequently, up to equality almost everywhere, the only density encoded by $Y_\balph(W)$ is the constant function $\delta$, and in particular $Y_\balph(W)$ does not encode any non-realisable density.
\end{corollary}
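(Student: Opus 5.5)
Our plan is to derive the density estimate from the standard uniform distribution theory for regular model sets, and then obtain the statement about encoded densities formally from it.

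\textbf{Step 1: reduction to finitely many intervals.} Since $\partial W$ is countable and $W$ is compact, $|\partial W|=0$. By the Jordan measurability of $W$, for every $\eta>0$ there are finite unions of intervals $W_-\subseteq W\subseteq W_+$ with $|W_+\setminus W_-|<\eta$. Concretely, $W_-$ can be a finite union of open intervals contained in $\mathrm{int}(W)$ and covering all of it except a set of small measure. The set $W_+$ can be obtained by covering the compact null set $\partial W$ by finitely many short open intervals and adjoining these to $W_-$, using compactness of $W\setminus$ (cover of $\partial W$), which lies in $\mathrm{int}(W)$. Then $Y_\balph(W_-)\subseteq Y_\balph(W)\subseteq Y_\balph(W_+)$, so it suffices to count points in $Q$ for windows that are single intervals $I$, with an error $o(R^2)$ uniform over all squares of side $R$.

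\textbf{Step 2: counting for an interval window.} For an interval $I$, a point of $Y_\balph(I)\cap Q$ corresponds to an integer point $z$ in the prism $\pi_\balph^{-1}(Q)\cap\{\langle z,\bn_\balph\rangle\in I\}$, which is a convex body of volume $R^2|I|/|\bn_\balph|$. We will use Weyl equidistribution of $\{z_3-\alpha_1z_1-\alpha_2z_2\}$ modulo the lattice, which holds because $1,\alpha_1,\alpha_2$ are linearly independent over $\Q$. Equivalently, we use unique ergodicity of the $\R^2$-action on $\R^3/\Z^3$ by translations along $V_\balph$, whose orbits are dense. This gives
\[
\#(Y_\balph(I)\cap Q)=\frac{|I|}{|\bn_\balph|}R^2+o(R^2)
\]
uniformly in the position and orientation of $Q$. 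Uniformity in position comes from unique ergodicity, since the ergodic averages of a Riemann-integrable function converge uniformly over starting points. To get uniformity in orientation, we first approximate $Q$ from inside and outside by unions of small axis-parallel squares of side $L$, with boundary loss $O(RL)$, and then apply uniform convergence on squares of side $L$ with $L$ large but fixed. This uniformity over rotations is the only delicate point, and we expect the tiling approximation to handle it. If preferred, this step can simply be quoted as the known uniform density of regular model sets (Moody, Schlottmann; see \cite{BaakGrim2013}).

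\textbf{Step 3: summing and the encoding consequence.} Summing the estimate of Step 2 over the intervals of $W_\pm$ and letting $\eta\to0$ yields the displayed bound with $\delta=|W|/|\bn_\balph|$. For the consequence, let $Q_k$, $\varphi_k$ and $r_k$ be as in the definition of encoding. For any dyadic subsquare $S\subseteq[0,1]^2$, the image $\varphi_k(S)$ is a square of side $r_k|S|^{1/2}\to\infty$ (the affine map is a similarity up to the identification, since squares go to squares). Hence $\mu_k(S)\to\delta|S|$. Because the boundaries of dyadic squares are Lebesgue-null and $\mu_k$ has uniformly bounded mass, any weak limit $\rho\,\mr{d}u$ satisfies $\int_S\rho=\delta|S|$ for all dyadic $S$, and therefore $\rho=\delta$ almost everywhere. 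Finally, the constant density $\delta$ is realised by the linear map $\sqrt{\delta}\,\mathrm{Id}$, so it is not non-realisable.
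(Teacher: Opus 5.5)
Your argument is correct, but it reaches the density estimate by a different route from the paper.

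\emph{How the paper counts.} The paper never counts lattice points in $\R^3$. It uses the bounded-displacement bijection $\Psi$ of Lemma \ref{lem.NormalForm} to replace $Y_\balph(W)\cap Q$ by $\Lambda_\balph(W)\cap\mr{pr}(Q)$, at a cost of $O(R)$ points. It then applies Lemma \ref{lem.RowCount}, which rests on Lemma \ref{lem.RowUnif}. That lemma counts along horizontal rows using only one-dimensional Weyl equidistribution for $\alpha_1$. It is uniform over all compact convex sets containing a large disc. Since $\mr{pr}(Q)$ is such a parallelogram, position and orientation are handled at once, with no tiling and no ergodic theory. The tool is also one the construction needs anyway.

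\emph{How you count.} You work directly with $Y_\balph(W)$, using unique ergodicity of the linear $\R^2$-action on $\R^3/\Z^3$, or quoting uniform distribution for regular model sets. This applies verbatim to any regular model set, and you never need $W$ to lie in an interval of length less than $1$. The price is twofold:
\begin{itemize}
\item you need the extra tiling step for rotated squares;
\item you leave unstated a standard step converting lattice-point counts into ergodic averages of a Riemann-integrable function. One way is to thicken each point of $Y_\balph(I)$ by a small disc in $V_\balph$, which produces a bounded $\Z^3$-periodic function on $\R^3$ with null discontinuity set.
\end{itemize}

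\emph{The encoding statement.} You apply the portmanteau theorem to the hypothetical limit $\rho\,\mr{d}u$, using that boundaries of dyadic squares are $\rho\,\mr{d}u$-null. This gives uniqueness of the encoded density without the paper's $O_T(r_k)$ boundary count. The paper instead shows $\mu_k\rar\delta\,\mr{d}u$ along every admissible sequence, which also shows that $\delta$ is actually encoded. Your version is enough for the conclusion that no non-realisable density is encoded.
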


The constant function $\delta$ is the Jacobian of the dilation $u\mapsto\delta^{1/2}u$, so it is realisable, and Corollary \ref{cor.Encode} therefore gives a negative answer to the question of Bhat and Dymond. We point out that Burago and Kleiner remark in \cite[Remark 1.5]{BuraKlei2002}, without giving details, that the technique of \cite{BuraKlei1998} can be used to produce separated nets which have uniform asymptotic density but are not BL equivalent to $\Z^2$. Uniform asymptotic density in their sense is the first statement of Corollary \ref{cor.Encode}, and by the second part of the proof of that corollary a net with this property does not encode any non-realisable density. To our knowledge, however, no proof of their remark has appeared, and the question is recorded as open in \cite{SmilSolo2026}. The uniformity of the density is a general feature of regular model sets (cf. \cite{Mood2002}), and in our setting it follows from an elementary counting lemma along rows of lattice points (Lemma \ref{lem.RowCount}), which we need anyway for the proof of Theorem \ref{thm.Main}.
%
%Corollary \ref{cor.Encode} also explains why the proof of Theorem \ref{thm.Main} cannot follow the argument of Burago and Kleiner to the end. Their argument rescales the set by factors tending to infinity, passes to a limit, and obtains a contradiction from the limiting density, which rules out BL maps with every Lipschitz constant at once. For $Y_\balph(W)$ every such limit is constant, so the obstruction to a BL map has to be found at finite scales. This is not special to our window. Whenever $\partial W$ has measure zero, the same argument as in the proof of Corollary \ref{cor.Encode} (cf. \cite{Mood2002}) shows that a square of side $R$ contains $\delta R^2+o(R^2)$ points of $Y_\balph(W)$, uniformly in the position of the square. Consequently, if the counts in two adjacent squares of side $R$ differ by at least $cR^2$ (we refer to $c$ as the \textit{contrast}), then $R$ is bounded in terms of $c$. Since the configurations which rule out $L$-BL bijections only work at scales which are large in terms of $L$ and $c$ (Proposition \ref{prop.Finite} below), the contrast cannot be fixed in advance. In our construction each $L$ is dealt with by a separate configuration, at its own scale, with a contrast $c_L$ which tends to $0$ as $L\rar\infty$.

The first main ingredient of the proof is a finite-scale version of the argument of Burago and Kleiner \cite[Section 3]{BuraKlei1998} based on stretched pairs of points (Proposition \ref{prop.Finite}). Some modification of their argument is needed, since it obtains a contradiction from a limit of rescaled copies of the set, and by Corollary \ref{cor.Encode} every such limit of our set has constant density (this is presumably the situation which Burago and Kleiner have in mind in \cite[Remark 1.5]{BuraKlei2002}). For each $L\ge 1$ and each $c>0$ there is a finite configuration of rectangles, with finitely many levels, each level consisting of checkerboards of squares placed along the edges of a grid in the squares of the previous level, such that the following holds at every sufficiently large scale: a Delone set with covering radius at most $L$ whose counts on adjacent squares of the configuration differ by at least $c$ times their area admits no $L$-BL bijection to $\Z^2$. We refer to the number $c$ here as the \textit{contrast}. We deduce this from a statement about BL homeomorphisms of the plane, in which the hypothesis of Burago and Kleiner on the Jacobian is replaced by a lower bound for the differences between the areas of the images of adjacent squares.

The second ingredient is the window. Since the internal space is one-dimensional, windows of a particular form correspond to simple patterns in the physical space. For an integer $Q\ge 1$, the set of $x\in\R/\Z$ with $\|Qx-\gamma\|<w/2$, which is a union of $Q$ equally spaced arcs, picks out the lattice points in a family of parallel strips in $\R^2$, with normal vector $Q\balph-\bm{\mu}$ for any $\bm{\mu}\in\Z^2$, and unions and intersections of such windows correspond to unions and intersections of the strip families (Lemma \ref{lem.Stage}). We fix $\alpha_1$ irrational and construct $\alpha_2$ by nested intervals, so that the normals of finitely many families approximate prescribed vectors with prescribed accuracy, and this allows us to reproduce the configuration of Proposition \ref{prop.Finite} for any given $L$. The contrast is made small, of order $|A_L|$, by intersecting with a short interval $A_L$, and the intervals $A_L$ shrink to a single point as $L\rar\infty$, which is what makes the boundary of the window countable. The main difficulty is then to show that the patterns for different values of $L$ do not interfere with each other. The patterns which have already been built when the pattern for $L$ is introduced are handled by a uniform counting estimate which depends only on $\alpha_1$, and which can therefore be applied before $\alpha_2$ has been determined. The patterns which are built later are confined to intervals of total length much smaller than $|A_L|$. Finally, the boundary of the window is a countable set whose position depends on parameters which are chosen at the very end of the construction, after $\balph$ is known. Since the numbers $\langle\balph,m\rangle$, $m\in\Z^2$, form a countable set, these parameters can be chosen so that none of these numbers lies on the boundary of the window modulo $1$, which gives part (i) of Theorem \ref{thm.Main}.

We have restricted our attention to planes in $\R^3$. The Burago--Kleiner argument works in all dimensions $d\ge 2$, and it seems plausible that our method extends to other cut-and-project sets with one-dimensional internal space, but we have not checked the details. For higher-dimensional internal spaces the correspondence between windows and strip families is less simple.   If $\alpha_1$ is badly approximable (for example $\alpha_1=\sqrt{2}$), then a short Borel--Cantelli argument shows that, for every choice of $\alpha_2$, the plane $V_\balph$ contains a vector which is Diophantine in the sense of \cite{HaynKellWeis2014}, so that $V_\balph$ is a Diophantine subspace in their terminology. The proof of \cite[Theorem 1.1]{HaynKellWeis2014} applies to every Diophantine subspace, and it follows that the boundary of our window necessarily has upper Minkowski dimension equal to $1$. This can also be proven directly from our construction. %Finally, Cortez and Navas showed that every Delone set in the hull of their example is non-rectifiable \cite[Remark 14]{CortNava2016}. We do not know whether the same is true for $Y_\balph(W)$, and in particular we do not know whether it holds for every translate of the window.

This paper is organized as follows: In Section \ref{sec.NormalForm} we reduce Theorem \ref{thm.Main} to a statement about a subset of $\Z^2$. In Section \ref{sec.Stretch} we prove the stretched-pair lemma for BL homeomorphisms and use it to show that no BL homeomorphism of the plane can match the area pattern of our configurations. In Section \ref{sec.Finite} we transfer this conclusion to Delone sets, proving Proposition \ref{prop.Finite}. Section \ref{sec.Counting} contains the counting lemmas, from which we deduce Corollary \ref{cor.Encode}, and the construction of strip families with prescribed normals. Finally, in Section \ref{sec.Proof} we construct the window and complete the proofs.

%\noindent\textbf{Acknowledgements:}

\noindent\textbf{Notation:} For $x,y\in\R^n$ we write $\langle x,y\rangle$ for the standard inner product and $|x|$ for the Euclidean norm, and for $x\in\R$ we write $\|x\|$ for the distance from $x$ to the nearest integer. For a measurable set $A$ we write $|A|$ for its Lebesgue measure (length or area, as appropriate), and for a finite set $A$ we write $\#A$ for its cardinality. A map $f$ between subsets of Euclidean spaces is \textit{$L$-BL} if
\[L^{-1}|x-y|\le|f(x)-f(y)|\le L|x-y|\]
for all $x$ and $y$ in its domain. A set $\Lambda\subseteq\R^2$ is a Delone set if it has \textit{separation} $\inf\{|x-y|:x,y\in\Lambda,~x\not=y\}>0$ and finite \textit{covering radius} $\sup_{u\in\R^2}\inf_{x\in\Lambda}|u-x|$.
For $x\in\R$ and $U\subseteq\R/\Z$ we write $x\in U$ to mean that the image of $x$ in $\R/\Z$ lies in $U$.

\noindent\textbf{Disclosure of AI use:} AI was used in this research, as well as to assist in the preparation of this manuscript. However, the final details of all proofs, as well as every statement, citation, and claim in the paper, has been checked by the author.

\section{Reduction to a problem about subsets of $\Z^2$}\label{sec.NormalForm}

For $\balph\in\R^2$ and a set $W\subseteq\R$ we let
\begin{equation}\label{eqn.LambdaDef}
\Lambda_\balph(W)=\left\{m\in\Z^2 : \langle\balph,m\rangle\in W+\Z\right\}.
\end{equation}
It is helpful to picture this set one row at a time. On the horizontal line $m_2=k$, the set $\Lambda_\balph(W)$ consists of those integers $m_1$ for which the point $m_1\alpha_1+k\alpha_2$, of the orbit of $k\alpha_2$ under rotation by $\alpha_1$, lies in $W$ modulo $1$. In this section we show that, when $W$ is contained in an interval of length less than $1$, the sets $Y_\balph(W)$ and $\Lambda_\balph(W)$ agree up to a linear change of coordinates and a bounded perturbation. Theorem \ref{thm.Main} will then follow from the following result, which is what we will actually prove.

\begin{theorem}\label{thm.Plane}
There exist $\alpha_1,\alpha_2\in\R$, with $1,\alpha_1,$ and $\alpha_2$ linearly independent over $\Q$, and a nonempty compact set $W\subseteq\R$, contained in an interval of length less than $1$, which is the closure of its interior and has countable boundary, with the following properties:
\begin{itemize}[itemsep=5bp,parsep=10bp, topsep=0bp]
\item[(i)] For every $m\in\Z^2$ we have that $\langle\balph,m\rangle\notin\partial W+\Z$, and
\item[(ii)] The set $\Lambda_\balph(W)$ is not BL equivalent to $\Z^2$.
\end{itemize}
\end{theorem}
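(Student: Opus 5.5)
Theorem~\ref{thm.Plane} is the main construction, so the plan follows the outline in the introduction. Fix $\alpha_1$ irrational (say $\alpha_1=\sqrt2$).

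\textbf{Step 1: finite-scale obstruction.} For each $L\ge1$ and contrast $c>0$, I would first prove a finite-scale Burago--Kleiner statement (what the introduction calls Proposition~\ref{prop.Finite}). It says there is a finite multi-level configuration of checkerboards of squares such that the following holds at all large enough scales $R$. Suppose a Delone set has covering radius at most $L$, and its counts on adjacent squares of the configuration differ by at least $c$ times the area. Then it admits no $L$-BL bijection to $\Z^2$. I would derive this from a stretched-pair lemma for BL homeomorphisms of the plane. The hypothesis on the Jacobian is replaced by a lower bound on differences of image areas of adjacent squares. One then passes from Delone sets to homeomorphisms by Delaunay-type piecewise linear interpolation at large scale.

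\textbf{Step 2: windows as strip patterns.} Next I would record the elementary fact that a window $\{x:\|Qx-\gamma\|<w/2\}$ selects exactly the lattice points of $\Z^2$ lying in a family of parallel strips with normal $Q\balph-\bm\mu$. Unions and intersections of windows correspond to unions and intersections of strip families. Intersecting further with a short interval $A_L$ multiplies densities by roughly $|A_L|$, by equidistribution. I also need a row-counting lemma: counts of $\Lambda_\balph(W)\cap$(square) equal $|W|\cdot$area $+o(\text{area})$. The error should be uniform in the position of the square, depend only on $\alpha_1$ and the number of boundary points of the pieces already built, and be available before $\alpha_2$ is fixed, since counting along rows uses only rotation by $\alpha_1$.

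\textbf{Step 3: inductive construction.} I would build $\alpha_2$ by nested intervals and the window as $W=I_0\cup\bigcup_L W_L$, with a base interval $I_0$ and $W_L\subseteq A_L$. The intervals $A_L$ are disjoint, shrink to a point, and have $|A_L|$ decaying fast. At stage $L$ I choose the configuration of Step 1 with contrast $c_L\asymp|A_L|$. I then pick integers $Q$ and shrink the interval for $\alpha_2$ so that the finitely many normals $Q\balph-\bm\mu$ approximate the grid directions needed. The patterns then reproduce the checkerboard contrast $c_L$ on some square of side $R_L$.

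Non-interference is handled in two directions. Earlier stages contribute an almost uniform count by Step 2, with error $\ll c_L R_L^2$ once $R_L$ is chosen large. Later stages live in $\bigcup_{L'>L}A_{L'}$, whose total length is $\ll|A_L|$, so they perturb counts by at most $\ll c_L R_L^2$ on all squares. Since an $L$-BL map would be $L'$-BL for all $L'\ge L$, every $L$ is excluded and (ii) follows.

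Compactness, the closure-of-interior property and countable boundary all hold because each $W_L$ is a finite union of intervals accumulating only at the limit point of the $A_L$. Linear independence of $1,\alpha_1,\alpha_2$ is ensured by also avoiding countably many rational relations in the nested intervals. For (i), the finitely many boundary parameters $\gamma$ at each stage are chosen last, after $\balph$ is known, avoiding the countable set $\langle\balph,\Z^2\rangle+\Z$. The limit point must be avoided too, which is arranged by choosing it as well.

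\textbf{Main obstacle.} I expect the hardest step to be the uniform counting in Step 3. The error estimate for earlier patterns must be quantitative in $R$, uniform in position, and independent of the yet-undetermined $\alpha_2$. This requires careful row-by-row discrepancy bounds for rotation by $\alpha_1$, counting the boundary points of the window.
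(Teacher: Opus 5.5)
Your overall plan follows the paper's route: a stretched-pair lemma with an area-gap hypothesis in place of the Jacobian hypothesis; stages corresponding to strip families; $\alpha_2$ built by nested intervals; contrast of order $|A_L|$; earlier hierarchies controlled by row counting that depends only on $\alpha_1$ and the number of arcs; later ones controlled by the small arc containing them; and phases and the accumulation point chosen last. The step that fails is the transfer in Step 1.

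\textbf{Why the transfer fails.} ``Delaunay-type piecewise linear interpolation at large scale'' does not produce a BL homeomorphism, because affine interpolation of a BL map on a triangulation need not be injective. For example, the $3$-BL homeomorphism $(x,y)\mapsto(x,y+2|x|)$ sends $(-1,-1),(0,0),(1,-1)$ to $(-1,1),(0,0),(1,1)$. The affine interpolant on that triangle therefore reverses orientation. Since this map is positively homogeneous, passing to a larger scale does not help, and nothing in your construction prevents triangles from straddling such a crease. Injectivity is genuinely needed. Step 3 of the stretched-pair argument identifies $f(\mathrm{int}\,S_i)$ with the bounded component of $\R^2\setminus f(\partial S_i)$ by winding numbers. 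Also, point counts $\#(\Lambda\cap sS)$ can only be matched with $|f(sS)|$ when $f$ is a homeomorphism.

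\textbf{How the paper handles it.} It avoids interpolation altogether with a compactness argument by contradiction (Proposition~\ref{prop.Finite}). Suppose no scale $s_*$ worked. Then there would be sets $\Lambda_n$ with covering radius at most $b$, scales $s_n\to\infty$, and $L$-BL bijections $g_n:\Lambda_n\to\Z^2$ satisfying the count gaps. Rescale by $s_n$, extend $G_n$ and $G_n^{-1}$ by McShane's formula, and apply Arzel\`a--Ascoli. The limits are mutually inverse, so the limit $f$ is an $L$-BL homeomorphism, and $s_n^{-2}\#(\Lambda_n\cap s_nS)\to|f(S)|$. This contradicts Proposition~\ref{prop.NoHomeo}. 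Running the contradiction over a sequence of \emph{different} sets is also what makes $s_*$ independent of $\Lambda$. Your Step 3 needs exactly this, because $R_L$ must be fixed before $\alpha_2$, the phases, and hence $\Lambda$ are known.

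\textbf{A smaller omission.} You do not say how all $J$ levels of the configuration are realized at the same time. The paper combines stripes $\mathrm{St}_j$ with thin bands $\mathrm{Bd}_{j+1}$ around the rows. Inside the bands, finer stripes replace coarser ones, and the bands cost only $O(|S|/N)$ in each level-$j$ square. The paper also counts the current hierarchy geometrically, on convex pieces with the single arc $A_L$. This is necessary because the moduli of hierarchy $L$ are chosen after $R_L$, so Lemma~\ref{lem.RowUnif} cannot be applied to $A_L\cap H_L$ directly.
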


The proof of Theorem \ref{thm.Plane} occupies Sections \ref{sec.Stretch}-\ref{sec.Proof}. To compare $Y_\balph(W)$ with $\Lambda_\balph(W)$, let $\mr{pr}:V_\balph\rar\R^2$ be the linear isomorphism defined by $\mr{pr}(z)=(z_1,z_2)$. Since $V_\balph$ is the graph of the linear function $(z_1,z_2)\mapsto\alpha_1z_1+\alpha_2z_2$, the map $\mr{pr}$ multiplies areas by $1/|\bn_\balph|$.

\begin{lemma}\label{lem.NormalForm}
Let $\balph\in\R^2$, let $W\subseteq\R$ be a set contained in an interval of length less than $1$, and let $s_W=\sup\{|s|:s\in W\}$. Then there is a bijection $\Psi:Y_\balph(W)\rar\Lambda_\balph(W)$ which satisfies
\[
|\mr{pr}(y)-\Psi(y)|\le s_W\quad\text{for all}\quad y\in Y_\balph(W).
\]
\end{lemma}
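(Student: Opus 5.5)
The plan is to define $\Psi$ directly at the level of lattice points: send $\pi_\balph(z)$ to $(z_1,z_2)$. Let
\[
S=\{z\in\Z^3:\langle z,\bn_\balph\rangle\in W\},
\]
so that $Y_\balph(W)=\pi_\balph(S)$. I will show that $\pi_\balph$ is injective on $S$, that $z\mapsto(z_1,z_2)$ is a bijection $S\rar\Lambda_\balph(W)$, and then estimate the displacement. All three steps rest on the same fact: $W$ lies in an interval of length less than $1$, so any two elements of $W$ differ by less than $1$ in absolute value.

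\emph{Step 1: $\pi_\balph$ is injective on $S$.} Suppose $z,z'\in S$ and $\pi_\balph(z)=\pi_\balph(z')$. Then $z-z'\in V_\balph^\perp=\R\bn_\balph$, so $z-z'=t\bn_\balph$ for some $t\in\R$. The third coordinate gives $-t\in\Z$. Hence
\[
\langle z,\bn_\balph\rangle-\langle z',\bn_\balph\rangle=t|\bn_\balph|^2 .
\]
The left-hand side is a difference of two elements of $W$, so its absolute value is less than $1$. Since $|\bn_\balph|^2\ge1$, this forces $t=0$, so $z=z'$.

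\emph{Step 2: $z\mapsto(z_1,z_2)$ is a bijection $S\rar\Lambda_\balph(W)$.} For $z\in S$ put $m=(z_1,z_2)$ and $s=\langle z,\bn_\balph\rangle\in W$. Then $\langle\balph,m\rangle=s+z_3\in W+\Z$, so $m\in\Lambda_\balph(W)$.
\begin{itemize}
\item Surjectivity: given $m\in\Lambda_\balph(W)$, choose $k\in\Z$ with $\langle\balph,m\rangle-k\in W$. Then $z=(m_1,m_2,k)$ lies in $S$ and maps to $m$.
\item Injectivity: $z_3$ is determined by $m$, because two integers $k\ne k'$ with $\langle\balph,m\rangle-k$ and $\langle\balph,m\rangle-k'$ both in $W$ would give two points of $W$ at distance at least $1$.
\end{itemize}
Combining Steps 1 and 2, $\Psi(\pi_\balph(z))=(z_1,z_2)$ is a well-defined bijection $Y_\balph(W)\rar\Lambda_\balph(W)$.

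\emph{Step 3: the displacement bound.} Write $s=\langle z,\bn_\balph\rangle$. Then
\[
\pi_\balph(z)=z-\frac{s}{|\bn_\balph|^2}\bn_\balph ,
\]
so
\[
\mr{pr}(\pi_\balph(z))-\Psi(\pi_\balph(z))=-\frac{s}{|\bn_\balph|^2}(\alpha_1,\alpha_2).
\]
Its norm is
\[
\frac{|s|\,|\balph|}{|\balph|^2+1}\le\frac{|s|}{2}\le s_W ,
\]
using $2|\balph|\le|\balph|^2+1$.

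Nothing here is a serious obstacle. The only point needing care is Step 1, namely that distinct admissible lattice points cannot project to the same point of $V_\balph$ even when $\balph$ is rational. That is exactly where the hypothesis that $W$ lies in an interval of length less than $1$ is used, together with $|\bn_\balph|\ge1$.
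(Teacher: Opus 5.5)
Your proposal is correct and follows the paper's proof essentially step for step. The steps match: injectivity of $\pi_\balph$ via the integer third coordinate and $|\bn_\balph|\ge 1$, the bijection to $\Lambda_\balph(W)$ because $\langle\balph,m\rangle-W$ contains at most one integer, and the same displacement formula. The only difference is that your final estimate uses $2|\balph|\le|\balph|^2+1$, which gives the slightly sharper bound $|s|/2$ where the paper uses $|\balph|\le|\bn_\balph|$.
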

\begin{proof}
Write $\bn=\bn_\balph$, $\pi=\pi_\balph$, and $K=\{z\in\Z^3:\langle z,\bn\rangle\in W\}$, so that $Y_\balph(W)=\pi(K)$. For $z\in K$ let $s_z=\langle z,\bn\rangle$, so that $\pi(z)=z-s_z\bn/|\bn|^2$.

First we show that $\pi$ is injective on $K$. If $z,z'\in K$ and $\pi(z)=\pi(z')$, then $z-z'=t\bn$ for some $t\in\R$. Comparing third coordinates shows that $t\in\Z$, and on the other hand $t|\bn|^2=s_z-s_{z'}$, which has absolute value less than $1$. Since $|\bn|\ge 1$, this forces $t=0$.

Next, the map $z\mapsto(z_1,z_2)$ is a bijection from $K$ to $\Lambda_\balph(W)$. To see this, let $m\in\Z^2$. Since $\langle(m_1,m_2,z_3),\bn\rangle=\langle\balph,m\rangle-z_3$, the integers $z_3$ for which $(m_1,m_2,z_3)\in K$ are the integers in the set $\langle\balph,m\rangle-W$. This set is contained in an interval of length less than $1$, so it contains at most one integer, and it contains one if and only if $\langle\balph,m\rangle\in W+\Z$.

Combining these two facts, the map $\Psi(\pi(z))=(z_1,z_2)$ is a well defined bijection from $Y_\balph(W)$ to $\Lambda_\balph(W)$. For $z\in K$ we have that
\[
\mr{pr}(\pi(z))-\Psi(\pi(z))=-\frac{s_z}{|\bn|^2}(\alpha_1,\alpha_2),
\]
and the norm of the right-hand side is at most $s_W$, since $|(\alpha_1,\alpha_2)|\le|\bn|$.
\end{proof}

We remark that the bijection $\Psi$ does not depend on any irrationality assumption on $\balph$.

\noindent\emph{Proof of Theorem \ref{thm.Main}, assuming Theorem \ref{thm.Plane}:} Let $\balph$ and $W$ be as in Theorem \ref{thm.Plane}, and write $\bn=\bn_\balph$. If $z\in\Z^3$ and $m=(z_1,z_2)$, then $\langle z,\bn\rangle=\langle\balph,m\rangle-z_3$. Therefore $\langle z,\bn\rangle\in\partial W$ would imply that $\langle\balph,m\rangle\in\partial W+\Z$, which is ruled out by part (i) of Theorem \ref{thm.Plane}. This proves part (i) of Theorem \ref{thm.Main}.

For part (ii), note that $Y_\balph(W)$ is a model set with physical space $V_\balph$, internal space $V_\balph^\perp$, lattice $\Z^3$, and a window which is compact and has nonempty interior. As explained after the statement of Theorem \ref{thm.Main}, the projection $\pi_\balph$ is injective on $\Z^3$ and the internal projection of $\Z^3$ is dense, so it follows from the general theory of model sets that $Y_\balph(W)$ is a Delone set in $V_\balph$ \cite[Chapter 7]{BaakGrim2013}.

Now suppose, by way of contradiction, that there is a BL bijection from $Y_\balph(W)$ to a lattice $\Gamma\subseteq V_\balph$. The map $\mr{pr}(y)\mapsto\Psi(y)$ is a bijection from $\mr{pr}(Y_\balph(W))$ to $\Lambda_\balph(W)$ with bounded displacement, and both of these sets are uniformly discrete, so this map is BL \cite[Proposition 1.8]{SmilSolo2026}. The set $\mr{pr}(\Gamma)$ is a lattice in $\R^2$, so there is a linear map taking it to $\Z^2$. Composing these maps with $\mr{pr}$ and $\mr{pr}^{-1}$, we obtain a BL bijection from $\Lambda_\balph(W)$ to $\Z^2$, contradicting part (ii) of Theorem \ref{thm.Plane}. This completes the proof of Theorem \ref{thm.Main}, assuming Theorem \ref{thm.Plane}.

\section{Stretched pairs}\label{sec.Stretch}

In this section we prove that no BL homeomorphism of the plane can reproduce a certain hierarchical pattern of areas. The argument is that of Burago and Kleiner \cite[Section 3]{BuraKlei1998} (see Lemmas 3.2 and 3.7 there), and our only real change is to the hypothesis. Burago and Kleiner assume that the Jacobian of the map is equal to a prescribed checkerboard function outside a set of small measure. We will only assume that the images of any two adjacent squares of a checkerboard differ in area by a definite amount.%, which is exactly what can be read off from point counts in Section \ref{sec.Finite}.

Let $e=(1,0)$. For an integer $N\ge 2$ we write
\[
R_N=[0,1]\times[0,1/N],\qquad S_i=\left[\frac{i-1}{N},\frac{i}{N}\right]\times\left[0,\frac{1}{N}\right]\quad(1\le i\le N),
\]
so that $R_N$ is the union of the $N$ squares $S_i$, each of area $N^{-2}$. For an integer $M\ge 1$, the \textit{marked points} of $R_N$ are the points
\[
x^i_{pq}=\left(\frac{p+M(i-1)}{NM},\frac{q}{NM}\right),\qquad 1\le i\le N,~0\le p,q\le M,
\]
which form a square grid of mesh $1/(NM)$ in $R_N$, with $x^i_{Mq}=x^{i+1}_{0q}$. The \textit{rows} of $R_N$ are the horizontal lines $\R\times\{q/(NM)\}$, $0\le q\le M$. If $\Sigma(u)=\xi+\ell u$, with $\xi\in\R^2$ and $\ell>0$, then we refer to $\Sigma(R_N)$, $\Sigma(S_i)$, and $\Sigma(x^i_{pq})$ as a rectangle, its squares, and its marked points, and we say that $\Sigma(R_N)$ is \textit{built on} the segment $\Sigma([0,1]\times\{0\})$. Finally, a pair of points $x\not=y$ is \textit{$A$-stretched} by a map $f$ if $|f(x)-f(y)|\ge A|x-y|$.

\begin{lemma}\label{lem.Stretch}
Let $L\ge 1$ and $c\in(0,1]$. Then there are an integer $M\ge 1$ and numbers $\beta>0$ and $N_0\ge 2$, depending only on $L$ and $c$, with the following property. Suppose that $N\ge N_0$, that $\Sigma(u)=\xi+\ell u$ with $\ell>0$, and that $f:\R^2\rar\R^2$ is an $L$-BL homeomorphism which satisfies
\begin{equation}\label{eqn.AreaGap}
\Big|\,|f(\Sigma(S_{i+1}))|-|f(\Sigma(S_i))|\,\Big|\ge c\,|\Sigma(S_i)|\quad\text{for}\quad 1\le i\le N-1.
\end{equation}
If the pair $\Sigma(0,0),\Sigma(1,0)$ is $A$-stretched by $f$, for some $A>0$, then there are two marked points $x$ and $y$ of $\Sigma(R_N)$, lying on a common row at distance $\ell/(NM)$ from each other, such that the pair $x,y$ is $(1+\beta)A$-stretched by $f$.
\end{lemma}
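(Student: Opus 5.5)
By the scaling $u\mapsto\Sigma(u)$ and replacing $f$ by $\ell^{-1}(f\circ\Sigma)$, which is still $L$-BL and satisfies \eqref{eqn.AreaGap} with $\Sigma$ the identity, we reduce to $\Sigma=\mathrm{id}$. Put $v=f(1,0)-f(0,0)$. Since $|v|\ge A$, it suffices to find a pair of row-adjacent marked points which is $(1+\beta)|v|$-stretched; note also that $|v|\ge 1/L$. We argue by contradiction. Suppose every pair of consecutive marked points on a common row satisfies $|f(x)-f(y)|<(1+\beta)|v|/(NM)$. We will show that this forces the images of some two adjacent squares $S_i,S_{i+1}$ to have nearly equal areas, contradicting \eqref{eqn.AreaGap}, once $\beta$ is small and $M$, $N_0$ are large in terms of $L$ and $c$.

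\textbf{Step 1: every row is nearly straight.} Consider row $q$, a polygonal path through $NM$ increments. Its total length is $<(1+\beta)|v|$. Its endpoints $f(0,q/(NM))$ and $f(1,q/(NM))$ are within $L/N$ of $f(0,0)$ and $f(1,0)$, so its net displacement has length at least $|v|-2L/N\ge|v|(1-2L^2/N)$. The total length defect of the row is therefore at most $(\beta+2L^2/N)|v|$. We split the row into the $N$ blocks lying in the squares $S_i$. The sum of the per-block defects is at most this total, so all but a fraction $\eta$ of the blocks have defect at most $\eta^{-1}(\beta+2L^2/N)|v|/N$. On such a block, the elementary estimate for a path of nearly minimal length shows the following: every marked point in the block lies within $O\big((\eta^{-1}(\beta+2L^2/N))^{1/2}\big)|v|/N$ of the straight segment of the correct length. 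Moreover, this segment has a direction $\theta_{q,i}$, and comparing the lengths of row $q$ and row $0$ forces $\theta_{q,i}$ to be nearly parallel to $v$.

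\textbf{Step 2: good squares.} Taking the union over the $M+1$ rows, all but a fraction $(M+1)\eta$ of the indices $i$ are good for every row simultaneously. We choose $\eta$ with $(M+1)\eta<1/4$. Then there is some $i$ with both $S_i$ and $S_{i+1}$ good, since otherwise at least half of the indices would be bad.

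\textbf{Step 3: compare the areas.} For a good square $S_i$ we describe $f(\partial S_i)$. The bottom and top sides are, up to the error from Step 1, segments of length $|v|/N$ parallel to $v$. The left and right sides are curves $\gamma_i$ and $\gamma_{i+1}$, where $\gamma_j$ is the image of the vertical side $x_1=(j-1)/N$. Between consecutive marked points these curves stay within $L/(NM)$ of the polygon through the marked points, because $f$ is $L$-Lipschitz. Since the top side is straight and parallel to $v$, the area of $f(S_i)$ equals $(|v|/N)\,h_i$ up to an error $o(N^{-2})$. Here $h_i$ is the component, perpendicular to $v$, of the offset $f(x^i_{0M})-f(x^i_{00})$.

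It remains to see that $h_i$ and $h_{i+1}$ are almost equal. The bottom row is straight through the consecutive squares $S_i$ and $S_{i+1}$, and so is the top row. Hence the perpendicular height of the top row above the bottom row is the same over $S_i$ and over $S_{i+1}$, up to the same error. This gives $\big||f(S_{i+1})|-|f(S_i)|\big|=O\big(\epsilon(\beta,\eta,N)\,L/N^2+L^2/(MN^2)\big)$, where $\epsilon(\beta,\eta,N)$ denotes the straightness error from Step 1. We now choose $M$ large in terms of $L$ and $c$, then $\eta$, then $\beta$ small, and finally $N_0$ large. With these choices the error is less than $cN^{-2}=c|S_i|$, contradicting \eqref{eqn.AreaGap}.

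The main obstacle is Step 3. One must make rigorous that the area enclosed by a Jordan curve made of two nearly straight parallel sides and two arbitrary $L$-Lipschitz side curves is controlled by the side curves only through their endpoints. One must also keep every error relative to the square scale $|v|/N$, rather than to the whole rectangle. This relative control is why the per-block defect averaging in Step 1 is needed, and not merely a global straightness estimate.
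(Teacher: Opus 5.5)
\textbf{Overall.} Your plan is the paper's: find adjacent squares $S_i,S_{i+1}$ on which every row is well behaved, then show that $|f(S_i)|$ and $|f(S_{i+1})|$ nearly agree. But Step 3, the step that actually contradicts the area gap, does not work as written.

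\textbf{Steps 1--2.} Averaging block deficits along each row and taking a union bound over the $M+1$ rows is a workable substitute for the paper's pigeonhole step. One correction is needed. The defect ``length minus displacement'' does not see non-uniform speed along a straight row, so it cannot give you the segment ``of the correct length''. It also does not locate each marked point along that segment, which is what you actually need. Both must come from the cap $(1+\beta)|v|/(NM)$ on each increment, combined with $\sum_i\langle b_{q,i},v/|v|\rangle\ge|v|-2L/N$. This is exactly the paper's notion of a regular vector.

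\textbf{The gap in Step 3.} You infer $|f(S_i)|=(|v|/N)h_i+o(N^{-2})$ \emph{because} the top and bottom image sides are straight and parallel to $v$. That inference is invalid. The statement you single out as the main obstacle is false: the area enclosed by two nearly straight parallel sides and two arbitrary Lipschitz side curves is not controlled by the endpoints of those curves.

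For a concrete example, let $f$ be the identity except on the disc of radius $1/(4N)$ about the midpoint of the common side of $S_i$ and $S_{i+1}$. On that disc, let $f$ push points horizontally by at most $\kappa/N$; this is a $2$-BL homeomorphism for $\kappa\le 1/8$. The corners, the top and bottom rows, $v$, and $h_i=h_{i+1}$ are all unchanged. Yet an area of order $\kappa N^{-2}$ moves from $f(S_{i+1})$ to $f(S_i)$, which need not be small compared with $cN^{-2}$. What rules this out in the lemma is the no-stretch hypothesis on the \emph{intermediate} rows crossing the bump, and your Step 3 never uses those rows.

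\textbf{The missing idea.} Goodness of all $M+1$ rows in blocks $i$ and $i+1$ makes $f(\partial S_{i+1})$ a near-translate of $f(\partial S_i)$ by $\bar v=v/N$, \emph{pointwise under a common parametrization}. Concretely, you get $|f(x^{i+1}_{pq})-f(x^i_{pq})-\bar v|\le\epsilon|v|/N$ for every marked point of $\partial S_i$. The Lipschitz bound between marked points then gives $|f(x+e/N)-\bar v-f(x)|\le\theta=O(\epsilon L/N+L/(NM))$ for all $x\in\partial S_i$. It is this parametrized closeness that matters: setwise closeness of two Jordan curves does not control the difference of the areas they enclose.

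\textbf{How to finish.} Compare $f(S_{i+1})-\bar v$ with $f(S_i)$ directly, as in the paper's Step 3, rather than computing each area through $h_i$:
\begin{enumerate}
\item Both sets are bounded by Jordan curves, because $f$ is a homeomorphism.
\item The straight-line homotopy between the two parametrized boundaries avoids every point at distance more than $\theta$ from $f(\partial S_i)$, so the winding numbers agree at those points.
\item Hence the symmetric difference lies in the $\theta$-neighbourhood of $f(\partial S_i)$. Covering that neighbourhood by discs gives area $O(\theta(L/N+\theta))$.
\end{enumerate}
With this replacement your order of choosing $M$, then $\eta$, then $\beta$, then $N_0$ goes through.
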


The integer $M$, which fixes how finely each square is subdivided, is chosen first, and $\beta$ and $N_0$ are then chosen in terms of $L$, $c$, and $M$. The order matters, because the argument below uses a pigeonhole principle over the $M+1$ rows.

\begin{proof}
Let $g(u)=\ell^{-1}f(\Sigma(u))$, which is again an $L$-BL homeomorphism of $\R^2$. For every set $\Omega\subseteq\R^2$ we have that $|g(\Omega)|=\ell^{-2}|f(\Sigma(\Omega))|$ and $|\Sigma(\Omega)|=\ell^2|\Omega|$, so $g$ satisfies \eqref{eqn.AreaGap} with $\Sigma$ replaced by the identity. Also, for $x\not=y$ the stretching ratio of $g$ on the pair $x,y$ is equal to that of $f$ on the pair $\Sigma(x),\Sigma(y)$, and $\Sigma$ maps the marked points and rows of $R_N$ to those of $\Sigma(R_N)$. Therefore, after replacing $f$ by $g$, we may assume that $\Sigma$ is the identity, and we do this from now on. We take
\[
\eta=\frac{c}{128\pi(L+2)},\quad M=\left\lceil\frac{L}{\eta}\right\rceil,\quad\lambda=\left(\frac{\eta}{3L}\right)^2,
\]
and
\[
\beta=\frac{\lambda}{16(M+1)},\quad N_0=\left\lceil\frac{32L^2(M+1)}{\lambda}\right\rceil.
\]
Note that $\beta\le\lambda\le 1$, and that $N_0\ge 2$.

After composing $f$ with an isometry of $\R^2$, which changes neither stretching ratios nor areas, we may assume that $f(0,0)=(0,0)$ and that $f(1,0)=(z,0)$ with $z>0$. By hypothesis $z\ge A$, and since $f$ is $L$-BL we have that $1/L\le z\le L$. It is therefore enough to find marked points $x,y$ as in the statement which are $(1+\beta)z$-stretched. Suppose, by way of contradiction, that
\begin{equation}\label{eqn.NoStretch}
|f(x)-f(y)|<(1+\beta)z|x-y|,
\end{equation}
for all distinct marked points $x,y$ on a common row. Note that if $x$ and $y$ are marked points on a common row then the segment between them is a union of segments between adjacent marked points on that row, so \eqref{eqn.NoStretch} is equivalent to the same inequality for adjacent marked points only. This is why it suffices to contradict \eqref{eqn.NoStretch}.

For $1\le i\le N-1$ and $0\le p,q\le M$ let
\[
v^i_{pq}=f(x^{i+1}_{pq})-f(x^i_{pq}),\qquad\bar v=(z/N,0).
\]
Since $x^{i+1}_{pq}=x^i_{pq}+e/N$, it follows from \eqref{eqn.NoStretch} that $|v^i_{pq}|<(1+\beta)z/N$. We say that $v^i_{pq}$ is \textit{regular} if its first coordinate is greater than $(1-\lambda)z/N$, and that the square $S_i$, for $i\le N-1$, is \textit{regular} if all of the $(M+1)^2$ vectors $v^i_{pq}$ are regular.

\noindent\emph{Step 1: a regular square exists.} Suppose not, and for each $1\le i\le N-1$ choose indices $p_i,q_i$ for which $v^i_{p_iq_i}$ is not regular. Sorting the indices $i$ according to the value of $q_i$ and the parity of $i$, we find $q\in\{0,\ldots,M\}$ and indices $i_1<i_2<\cdots<i_t$, all of the same parity, with $q_{i_j}=q$ for each $j$, and with
\[
t\ge\frac{N-1}{2(M+1)}\ge\frac{N}{4(M+1)}.
\]
Write $p_j=p_{i_j}$, and consider the sequence of points
\[
(0,0),~x^1_{0q},~x^{i_1}_{p_1q},~x^{i_1+1}_{p_1q},~x^{i_2}_{p_2q},~x^{i_2+1}_{p_2q},~\ldots,~x^{i_t}_{p_tq},~x^{i_t+1}_{p_tq},~x^N_{Mq},~(1,0).
\]
All of these points except the first and last lie on the row $q$, and, since $i_{j+1}\ge i_j+2$, the first coordinate of $x^{i_j+1}_{p_jq}$ is at most $M(i_j+1)/(NM)$, which is at most the first coordinate of $x^{i_{j+1}}_{p_{j+1}q}$. %So the points on the row are listed from left to right, the segments between consecutive ones have total length $1$, and the first and last steps are vertical, of length $q/(NM)\le 1/N$.
The first coordinate of $f$ increases by $z$ in total along this sequence, and we split this increase into three parts:
\begin{itemize}[itemsep=5bp,parsep=5bp, topsep=0bp]
\item[(i)] Each of the $t$ steps from $x^{i_j}_{p_jq}$ to $x^{i_j+1}_{p_jq}$ contributes the first coordinate of $v^{i_j}_{p_jq}$, which is at most $(1-\lambda)z/N$, since this vector is not regular.
\item[(ii)] The remaining horizontal pieces join marked points on the row $q$ and have total length $1-t/N$, so by \eqref{eqn.NoStretch} they contribute at most $(1+\beta)z(1-t/N)$.
\item[(iii)] The two vertical pieces contribute at most $L/N$ each, since $f$ is $L$-Lipschitz.
\end{itemize}
Therefore
\[
z\le(1-\lambda)z\frac{t}{N}+(1+\beta)z\left(1-\frac{t}{N}\right)+\frac{2L}{N},
\]
which, after dividing by $z$ and using the fact that $z\ge 1/L$, gives
\[
(\lambda+\beta)\frac{t}{N}\le\beta+\frac{2L^2}{N}.
\]
The left-hand side is at least $\lambda/(4(M+1))$, while our choices of $\beta$ and $N\ge N_0$ make the right-hand side at most $\lambda/(8(M+1))$. This contradiction shows that some square $S_i$ with $i\le N-1$ is regular.

\noindent\emph{Step 2: regular vectors are close to $\bar v$.} Let $v=v^i_{pq}=(v_1,v_2)$ be regular. Then \[(1-\lambda)z/N<v_1\le|v|<(1+\beta)z/N,\] so $|v_1-z/N|\le\lambda z/N$, using $\beta\le\lambda$. Also
\[
v_2^2=|v|^2-v_1^2<\left((1+\beta)^2-(1-\lambda)^2\right)\frac{z^2}{N^2}=(\beta+\lambda)(2+\beta-\lambda)\frac{z^2}{N^2}\le\frac{4\lambda z^2}{N^2}.
\]
It follows that $|v-\bar v|^2\le(\lambda^2+4\lambda)z^2/N^2\le 5\lambda L^2/N^2$, and therefore that
\[
|v-\bar v|\le\frac{3L\sqrt{\lambda}}{N}=\frac{\eta}{N}.
\]

\noindent\emph{Step 3: the images of a regular square and the next square have nearly equal area.} Let $S_i$ be a regular square, guaranteed by Step 1, and put $\theta=(\eta+L/M)/N$. We will show that
\begin{equation}\label{eqn.AreaClose}
\Big|\,|f(S_{i+1})|-|f(S_i)|\,\Big|\le 16\pi\theta\left(\frac{L}{N}+\theta\right).
\end{equation}
Parametrize $\partial S_i$ counterclockwise, and define two closed curves on it by $\gamma_0(x)=f(x)$ and $\gamma_1(x)=f(x+e/N)-\bar v$. Since translation by $e/N$ maps $\partial S_i$ onto $\partial S_{i+1}$, preserving orientation, $\gamma_1$ traces the boundary of $f(S_{i+1})-\bar v$. The marked points on each side of $S_i$ are spaced $1/(NM)$ apart, so every $x\in\partial S_i$ is within $1/(2NM)$ of a marked point $x^i_{pq}\in\partial S_i$, and then $x+e/N$ is within $1/(2NM)$ of $x^{i+1}_{pq}$. Using Step 2, it follows that
\begin{align*}
|\gamma_1(x)-\gamma_0(x)|&\le\left|f(x+e/N)-f(x^{i+1}_{pq})\right|+|v^i_{pq}-\bar v|+\left|f(x^i_{pq})-f(x)\right|\\
&\le\frac{L}{2NM}+\frac{\eta}{N}+\frac{L}{2NM}=\theta.
\end{align*}

Now we use the fact that $f$ is a homeomorphism of $\R^2$. The curve $f(\partial S_i)$ is a Jordan curve, the set $f(\mr{int}\,S_i)$ is connected and bounded, and the set $f(\R^2\setminus S_i)$ is connected and unbounded. Since these two sets are disjoint and their union is the complement of $f(\partial S_i)$, the set $f(\mr{int}\,S_i)$ is the bounded component of $\R^2\setminus f(\partial S_i)$. The winding number of $\gamma_0$ about a point $y\notin f(\partial S_i)$ is $\pm1$ if $y\in f(\mr{int}\,S_i)$, by the Jordan curve theorem in the form \cite[Proposition 5.20]{Fult1995}, and it is $0$ if $y$ lies in the unbounded component, since it is constant on that component and vanishes far from the curve. The same holds for $\gamma_1$ and the set $f(\mr{int}\,S_{i+1})-\bar v$. If $y$ is at distance more than $\theta$ from $f(\partial S_i)$, then the homotopy $(1-t)\gamma_0+t\gamma_1$, $0\le t\le 1$, does not pass through $y$, so $\gamma_0$ and $\gamma_1$ have the same winding number about $y$ (see \cite[Chapter 3]{Fult1995}). Therefore, for such $y$ we have that $y\in f(S_i)$ if and only if $y\in f(S_{i+1})-\bar v$. In other words, the symmetric difference of $f(S_i)$ and $f(S_{i+1})-\bar v$ lies in the closed $\theta$-neighborhood $\mc{N}_\theta(f(\partial S_i))$ of the curve $f(\partial S_i)$, and
\[
\Big|\,|f(S_{i+1})|-|f(S_i)|\,\Big|\le|\mc{N}_\theta(f(\partial S_i))|.
\]
To estimate $|\mc{N}_\theta(f(\partial S_i))|$, let $M'=\lceil L/(N\theta)\rceil$ and cut each side of $S_i$ into $M'$ segments of length $1/(NM')$. The image of each of the $4M'$ segments has diameter at most $L/(NM')\le\theta$, so it lies in the disc of radius $\theta$ about the image of one of its endpoints. Therefore $\mc{N}_\theta(f(\partial S_i))$ is covered by $4M'$ discs of radius $2\theta$, and
\[
|\mc{N}_\theta(f(\partial S_i))|\le 16\pi M'\theta^2\le 16\pi\theta^2\left(\frac{L}{N\theta}+1\right)=16\pi\theta\left(\frac{L}{N}+\theta\right).
\]
This proves \eqref{eqn.AreaClose}.

\noindent\emph{Conclusion.} Our choices of $\eta$ and $M$ give $L/M\le\eta$, so $N\theta\le 2\eta=c/(64\pi(L+2))\le 1$. Then for the right-hand side of \eqref{eqn.AreaClose} we have that
\[
\frac{16\pi}{N^2}\,(N\theta)\,(L+N\theta)\le\frac{16\pi}{N^2}\cdot\frac{c}{64\pi(L+2)}\cdot(L+1)\le\frac{c}{4N^2}<c|S_i|.
\]
However, this contradicts \eqref{eqn.AreaGap}. Therefore \eqref{eqn.NoStretch} is false, and we conclude that there is a pair of adjacent marked points on a common row which is $(1+\beta)z$-stretched.
\end{proof}

%We remark that the hypothesis \eqref{eqn.AreaGap} is used only once, at the end of the proof, for the single regular square produced by Step 1. Since we have no control over which square this is, the hypothesis is needed for every adjacent pair. Also, nothing in the proof depends on which of the two squares in each pair has the larger image. In Step 3 it matters that the boundary curves of $f(S_i)$ and $f(S_{i+1})-\bar v$ are compared pointwise, through a common parametrization: knowing only that each of the two boundaries lies close to the other, as sets, would not control the difference of the areas.

Next we iterate Lemma \ref{lem.Stretch}. For integers $M\ge 1$, $N\ge 2$, and $J\ge 1$, we define a finite family $\mc{C}=\mc{C}(M,N,J)$ of rectangles by induction on the \textit{level} $j$. The level $1$ rectangle is $R_N$, built on the segment $[0,1]\times\{0\}$. If $R=\xi+\ell R_N$ is a rectangle of level $j<J$, then for every pair of adjacent marked points $x$ and $x+\ell e/(NM)$ of $R$ on a common row (including the bottom and top rows), the rectangle
\[
x+\frac{\ell}{NM}R_N
\]
built on the segment between them is a rectangle of level $j+1$. The family $\mc{C}$ consists of all rectangles of levels $1,\ldots,J$. A rectangle of level $j$ has length $(NM)^{1-j}$, and its squares have side length $N^{-1}(NM)^{1-j}$.

\begin{proposition}\label{prop.NoHomeo}
Let $L\ge 1$ and $c\in(0,1]$, and let $M$, $\beta$, and $N_0$ be as in Lemma \ref{lem.Stretch}. Suppose that $N\ge N_0$ and that $J\ge 1$ satisfies $(1+\beta)^J>L^2$. Then there is no $L$-BL homeomorphism $f:\R^2\rar\R^2$ with the property that, for every rectangle in $\mc{C}(M,N,J)$ and every pair of adjacent squares $S$ and $S'$ of that rectangle,
\begin{equation}\label{eqn.AreaGapC}
\Big|\,|f(S')|-|f(S)|\,\Big|\ge c\,|S|.
\end{equation}
\end{proposition}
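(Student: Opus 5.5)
Suppose, for contradiction, that such an $f$ exists; the plan is to apply Lemma \ref{lem.Stretch} once at each level of $\mc{C}(M,N,J)$ and accumulate stretching. Start at level $1$: the rectangle $R_N$ is built on $[0,1]\times\{0\}$, with $\Sigma$ the identity. The pair $(0,0),(1,0)$ is $A_1$-stretched by $f$ with $A_1=|f(1,0)-f(0,0)|\ge L^{-1}$, because $f$ is $L$-BL. Every pair of adjacent squares of $R_N$ satisfies \eqref{eqn.AreaGapC}, and this is exactly \eqref{eqn.AreaGap} for $\Sigma=\mr{id}$. So Lemma \ref{lem.Stretch} (with $N\ge N_0$) gives two adjacent marked points $x,y=x+e/(NM)$ of $R_N$ on a common row which are $(1+\beta)A_1$-stretched.

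Next, induct on the level. Suppose that at level $j<J$ we have a rectangle $R=\xi+\ell R_N\in\mc{C}$ and adjacent marked points $x$, $x+\ell e/(NM)$ on a common row of $R$ which are $A_{j+1}$-stretched, with $A_{j+1}=(1+\beta)^jA_1$. By construction of $\mc{C}$, the rectangle $R'=x+\frac{\ell}{NM}R_N$ built on that segment belongs to $\mc{C}$ at level $j+1$. Write it as $\Sigma'(R_N)$ with $\Sigma'(u)=x+\frac{\ell}{NM}u$. Then $\Sigma'(0,0),\Sigma'(1,0)$ is precisely the stretched pair. Hypothesis \eqref{eqn.AreaGapC} for adjacent squares of $R'$ is \eqref{eqn.AreaGap} for $\Sigma'$, because the squares of $R'$ are the images $\Sigma'(S_i)$. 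Lemma \ref{lem.Stretch} then yields adjacent marked points of $R'$ on a common row that are $(1+\beta)^{j+1}A_1$-stretched.

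After applying the lemma at all $J$ levels, including level $J$ itself (which uses only the area hypothesis on level-$J$ rectangles and needs no level $J+1$), we obtain a pair $x\ne y$ with $|f(x)-f(y)|\ge(1+\beta)^JL^{-1}|x-y|>L|x-y|$, using $(1+\beta)^J>L^2$. This contradicts the fact that $f$ is $L$-Lipschitz.

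The argument is mostly bookkeeping. The one point to check carefully is that the constants $M,\beta,N_0$ from Lemma \ref{lem.Stretch} are independent of the scale $\ell$ and the position $\xi$, so that a single application rule works at every level. The lemma's statement gives this, since these constants depend only on $L$ and $c$. It also matters that the marked points produced lie on a common row at distance $\ell/(NM)$, which is exactly the configuration used to define the next level of $\mc{C}$, including pairs on the bottom and top rows.
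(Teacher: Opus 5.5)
Your proposal is correct and follows the paper's proof: start from the $L^{-1}$-stretched pair $(0,0),(1,0)$, apply Lemma \ref{lem.Stretch} once per level, passing each time to the level-$(j+1)$ rectangle built on the stretched pair of adjacent marked points, and reach $(1+\beta)^JL^{-1}>L$, which contradicts that $f$ is $L$-Lipschitz. Using $A_1=|f(1,0)-f(0,0)|$ in place of $L^{-1}$ is only a cosmetic difference.
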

\begin{proof}
Suppose that $f$ is such a map. The pair $(0,0),(1,0)$ is $L^{-1}$-stretched by $f$, since $f$ is $L$-BL. Suppose that, for some $1\le j\le J$, we have found a rectangle $R$ of level $j$, built on a segment whose endpoints are $(1+\beta)^{j-1}L^{-1}$-stretched by $f$. By \eqref{eqn.AreaGapC}, the map $f$ satisfies \eqref{eqn.AreaGap} for $R$, so Lemma \ref{lem.Stretch} gives a pair of adjacent marked points of $R$, on a common row, which is $(1+\beta)^jL^{-1}$-stretched. If $j<J$ then the rectangle built on the segment between these points belongs to $\mc{C}$ and has level $j+1$, so we may continue. After $J$ steps we obtain a pair of points which is $(1+\beta)^JL^{-1}$-stretched, and $(1+\beta)^JL^{-1}>L$, contradicting the fact that $f$ is $L$-Lipschitz.
\end{proof}

Proposition \ref{prop.NoHomeo} applies equally to every translated and dilated copy $x+s\mc{C}$ of the configuration, with $x\in\R^2$ and $s>0$. To see this, suppose that an $L$-BL homeomorphism $f$ satisfies \eqref{eqn.AreaGapC} for every pair of adjacent squares of every rectangle in $x+s\mc{C}$. Then the map
\[
g(u)=s^{-1}f(x+su)
\]
is again an $L$-BL homeomorphism of $\R^2$, and for every square $S$ of a rectangle in $\mc{C}$ we have that
\[
|g(S)|=s^{-2}|f(x+sS)|.
\]
Since $|x+sS|=s^2|S|$, it follows that $g$ satisfies \eqref{eqn.AreaGapC} for every pair of adjacent squares of every rectangle in $\mc{C}$, which is impossible. In what follows the integer $N$ will also be required to be even, which is harmless, since the proposition holds for every $N\ge N_0$.

\section{From homeomorphisms to Delone sets}\label{sec.Finite}

In this section we transfer Proposition \ref{prop.NoHomeo} from homeomorphisms to Delone sets, by replacing the area of the image of a square with the number of points of the set in a large dilate of that square. The idea is that, viewed from far away, an $L$-BL bijection from a Delone set $\Lambda$ to $\Z^2$ looks like an $L$-BL homeomorphism $f$ of the plane, and for a square $S$ and a large number $s$, the number of points of $\Lambda$ in $sS$, divided by $s^2$, looks like the area of $f(S)$. Therefore, if the numbers of points of $\Lambda$ in adjacent squares of a large dilate of $\mc{C}(M,N,J)$ differ by a definite proportion of their area, then the same should be true of the areas of the images under $f$ of the corresponding squares of $\mc{C}(M,N,J)$, which is ruled out by Proposition \ref{prop.NoHomeo}.

To make this precise we use the compactness argument from the proof of \cite[Lemma 2.1]{BuraKlei1998} (see also \cite[Proposition 3.13]{SmilSolo2026}), with one change. Burago and Kleiner apply the argument to a single Delone set, rescaled by a sequence of factors tending to infinity, and they draw their conclusion from the limit. For our construction we need a conclusion at one finite scale, which holds for every set whose counts on adjacent squares differ in this way at that scale, so that the scale can be fixed before the set is known. We therefore argue by contradiction. If there were no such scale, then there would be sets $\Lambda_n$, each admitting an $L$-BL bijection to $\Z^2$, whose counts on adjacent squares of the dilates $s_n\mc{C}(M,N,J)$ differ in this way, for a sequence of scales $s_n$ tending to infinity. These sets may be different for different values of $n$, but the compactness argument still applies to them, and it produces an $L$-BL homeomorphism of the plane which contradicts Proposition \ref{prop.NoHomeo}.

\begin{proposition}\label{prop.Finite}
Let $L\ge 1$ and $c\in(0,1]$, let $M$, $\beta$, and $N_0$ be as in Lemma \ref{lem.Stretch}, and suppose that $N\ge N_0$ and that $J\ge 1$ satisfies $(1+\beta)^J>L^2$. Then, for every $b>0$, there is an $s_*>0$, depending only on $L,c,N,J,$ and $b$, with the following property. Suppose that $\Lambda\subseteq\R^2$ is a set with covering radius at most $b$, and that there are $x\in\R^2$ and $s\ge s_*$ such that, for every rectangle in $x+s\,\mc{C}(M,N,J)$ and every pair of adjacent squares $S$ and $S'$ of that rectangle,
\begin{equation}\label{eqn.CountGap}
\left|\#(\Lambda\cap S')-\#(\Lambda\cap S)\right|\ge c\,|S|.
\end{equation}
Then there is no $L$-BL bijection from $\Lambda$ to $\Z^2$.
\end{proposition}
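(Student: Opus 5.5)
The plan is to argue by contradiction with a compactness argument, as outlined before the statement. Fix $L,c,N,J,b$ and suppose no $s_*$ works. Then there are sets $\Lambda_n$ with covering radius at most $b$, points $x_n$, scales $s_n\rar\infty$, and $L$-BL bijections $\phi_n:\Lambda_n\rar\Z^2$, such that \eqref{eqn.CountGap} holds for every pair of adjacent squares of every rectangle in $x_n+s_n\mc{C}(M,N,J)$. After translating $\Lambda_n$ by $-x_n$ (and translating $\Z^2$ correspondingly, which only replaces $\Z^2$ by a translate) we may assume $x_n=0$. We rescale:
\[
f_n(u)=s_n^{-1}\phi_n(s_nu)\quad\text{on } s_n^{-1}\Lambda_n .
\]
These maps are $L$-BL on sets whose covering radius $b/s_n$ tends to $0$. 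Normalizing $f_n$ at a point near the origin (subtracting the image of a nearest point of $s_n^{-1}\Lambda_n$ to $0$), the maps are uniformly bounded on compact sets. We extend each $f_n$ to an $L'$-Lipschitz map of $\R^2$ (Kirszbraun, or coordinatewise McShane with $L'=\sqrt2 L$). Arzel\`a--Ascoli then gives a subsequence converging locally uniformly to some $f$. The inequality $L^{-1}|x-y|\le|f_n(x)-f_n(y)|\le L|x-y|$ holds on the $b/s_n$-dense sets, so in the limit $f$ is $L$-BL on all of $\R^2$. The same argument applied to the inverses $f_n^{-1}$, defined on $s_n^{-1}\Z^2$, shows that $f$ is onto. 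Hence $f$ is an $L$-BL homeomorphism of $\R^2$.

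The main step is to show that $f$ satisfies \eqref{eqn.AreaGapC} for every square of $\mc{C}=\mc{C}(M,N,J)$; Proposition \ref{prop.NoHomeo} then gives the contradiction. Since $\mc{C}$ is finite, it suffices to treat a fixed square $S$. Because $\phi_n$ is a bijection,
\[
\#(\Lambda_n\cap s_nS)=\#\big(\Z^2\cap\phi_n(\Lambda_n\cap s_nS)\big),
\]
and after rescaling this equals the number of points of $s_n^{-1}\Z^2$ in $f_n(s_n^{-1}\Lambda_n\cap S)$.

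We compare this with $s_n^2|f(S)|$, and expect this comparison to be the main obstacle. Let $\epsilon_n=\sup_{u\in S}|f_n(u)-f(u)|+Lb/s_n$, which tends to $0$. Every lattice point counted lies within distance $\epsilon_n$ of $f(S)$. Conversely, every point of $s_n^{-1}\Z^2$ in $f(S)$ at distance more than $C\epsilon_n$ from $f(\partial S)$ is the image under $f_n$ of a point of $s_n^{-1}\Lambda_n$ that lies in $S$, because $f_n^{-1}$ is close to $f^{-1}$ and $f^{-1}$ maps such a point deep inside $S$. This gives
\[
\Big|s_n^{-2}\#(\Lambda_n\cap s_nS)-|f(S)|\Big|\le s_n^{-2}\#\big(s_n^{-1}\Z^2\cap\mc{N}_{C\epsilon_n}(f(\partial S))\big)+o(1).
\]
The right-hand side tends to $0$: the curve $f(\partial S)$ is a Lipschitz image of a segment union, so, by the disc-covering estimate from Step 3 of Lemma \ref{lem.Stretch}, its $r$-neighborhood has area $O(r)$, and the lattice count in a neighborhood of width $r\ge 1/s_n$ is $O(s_n^2 r)$.

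Dividing \eqref{eqn.CountGap} by $s_n^2$, using $|s_nS|=s_n^2|S|$, and letting $n\rar\infty$ yields $\big||f(S')|-|f(S)|\big|\ge c|S|$ for all adjacent squares of all rectangles in $\mc{C}$. This contradicts Proposition \ref{prop.NoHomeo}, so a suitable $s_*$ exists.
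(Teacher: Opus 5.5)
Your proposal is correct and follows the paper's proof: contradiction via counterexamples at scales $s_n\to\infty$, rescaling, Lipschitz extension and Arzel\`a--Ascoli to obtain an $L$-BL homeomorphism $f$ as the limit, convergence of $s_n^{-2}\#(\Lambda_n\cap s_nS)$ to $|f(S)|$, and then Proposition \ref{prop.NoHomeo}. The only variation is in the count-to-area step. You compare forward images with a neighborhood of $f(\partial S)$; with your $\epsilon_n$, the co-Lipschitz bound for $f_n$ on $s_n^{-1}\Lambda_n$ gives the needed inclusion with $C$ of order $L^2$, even without invoking convergence of the inverses. The paper instead uses the limiting inverse $h$ and the inner and outer parallel sets $S_\theta^\pm$.
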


%Note that no separation hypothesis on $\Lambda$ is needed, since a set which admits an $L$-BL bijection to $\Z^2$ automatically has separation at least $1/L$.

\begin{proof}
Suppose, by way of contradiction, that the proposition fails for some $b>0$. Then, for each $n\in\N$, there are a set $\Lambda_n\subseteq\R^2$ with covering radius at most $b$, a point $x_n\in\R^2$, a number $s_n\ge n$, and an $L$-BL bijection $g_n:\Lambda_n\rar\Z^2$, such that \eqref{eqn.CountGap} holds for $\Lambda_n$ and for every pair of adjacent squares of every rectangle in $x_n+s_n\mc{C}$, where $\mc{C}=\mc{C}(M,N,J)$. Since $g_n$ is $L$-BL, the set $\Lambda_n$ has separation at least $1/L$, so it has only finitely many points in any bounded set, and the infimum in the definition of its covering radius is attained. Therefore we may choose $y_n\in\Lambda_n$ with $|y_n-x_n|\le b$. Replacing $\Lambda_n$ by $\Lambda_n-x_n$ and $g_n$ by $y\mapsto g_n(y+x_n)-g_n(y_n)$, which is again an $L$-BL bijection onto $\Z^2$, we may assume that $x_n=0$, that $|y_n|\le b$, and that $g_n(y_n)=0$.

\noindent\emph{Rescaling and passing to a limit.} Let $X_n=s_n^{-1}\Lambda_n$ and $Z_n=s_n^{-1}\Z^2$, and define $G_n:X_n\rar Z_n$ by $G_n(u)=s_n^{-1}g_n(s_nu)$. Then $G_n$ is an $L$-BL bijection, and we write $H_n:Z_n\rar X_n$ for its inverse. The set $X_n$ has covering radius at most $b/s_n$, the set $Z_n$ has covering radius at most $1/s_n$, and the point $u_n=s_n^{-1}y_n$ satisfies $|u_n|\le b/s_n$, $G_n(u_n)=0$, and $H_n(0)=u_n$. We extend each coordinate function of $G_n$ and $H_n$ to all of $\R^2$ with the same Lipschitz constant $L$, (by the McShane Lipschitz extension formula $\inf_x(\phi(x)+L|u-x|)$, applied to each coordinate $\phi$), we obtain maps $\tilde G_n,\tilde H_n:\R^2\rar\R^2$ which are $\sqrt{2}L$-Lipschitz, which agree with $G_n$ on $X_n$ and with $H_n$ on $Z_n$, and which are uniformly bounded on compact sets, since $\tilde G_n(u_n)=0$ and $\tilde H_n(0)=u_n$. By the Arzel\`{a}--Ascoli theorem and a diagonal argument, after passing to a common subsequence we may assume that $\tilde G_n\rar f$ and $\tilde H_n\rar h$ uniformly on compact sets, for some continuous maps $f,h:\R^2\rar\R^2$.

\noindent\emph{The limit is a BL homeomorphism.} Let $u,u'\in\R^2$, and choose $x_n,x_n'\in X_n$ with $|x_n-u|,|x_n'-u'|\le b/s_n$. Then
\begin{align*}
|G_n(x_n)-f(u)|&\le|\tilde G_n(x_n)-\tilde G_n(u)|+|\tilde G_n(u)-f(u)|\\
&\le\frac{\sqrt{2}Lb}{s_n}+|\tilde G_n(u)-f(u)|\rar 0,
\end{align*}
and similarly $G_n(x_n')\rar f(u')$. Since
\[L^{-1}|x_n-x_n'|\le|G_n(x_n)-G_n(x_n')|\le L|x_n-x_n'|\]
for every $n$, it follows that $f$ is $L$-BL. Next, $G_n(x_n)\in Z_n$, so
\[\tilde H_n(G_n(x_n))=H_n(G_n(x_n))=x_n.\]
Since the maps $\tilde H_n$ are uniformly Lipschitz and converge to $h$ uniformly on compact sets, the left-hand side of the above equation tends to $h(f(u))$, while the right-hand side tends to $u$. Therefore $h\circ f$ is the identity. Choosing points of $Z_n$ close to a given $w\in\R^2$ and arguing in the same way shows that $f\circ h$ is the identity. So $f$ is an $L$-BL homeomorphism of $\R^2$, with inverse $h$.

\noindent\emph{Counts converge to areas.} Let $S$ be a square of one of the rectangles in $\mc{C}$. For $0<\theta\le 1$ let $S_\theta^-$ be the set of points of $S$ at distance at least $\theta$ from $\R^2\setminus S$, and let $S_\theta^+$ be the set of points of $\R^2$ at distance at most $\theta$ from $S$. We will show that
\begin{equation}\label{eqn.CountLimit}
	\lim_{n\rar\infty}\frac{\#(\Lambda_n\cap s_nS)}{s_n^2}=|f(S)|.
\end{equation}
Since $H_n$ is a bijection from $Z_n$ to $X_n$, we have that
\[
\#(\Lambda_n\cap s_nS)=\#(X_n\cap S)=\#\{w\in Z_n:H_n(w)\in S\}.
\]
If $H_n(w)\in S$ then $w=G_n(H_n(w))$, and $|G_n(H_n(w))|\le L(|H_n(w)|+|u_n|)$, so all of the points $w$ in this set lie in a fixed closed disc, which does not depend on $n$. Let $B$ be a closed disc which contains this disc and also the compact set $f(S_1^+)$. Now fix $\theta\in(0,1]$. Since $\tilde H_n\rar h$ uniformly on $B$, for all sufficiently large $n$ we have that $|H_n(w)-h(w)|<\theta$ for all $w\in Z_n\cap B$. For such $n$, if $h(w)\in S_\theta^-$ then $H_n(w)\in S$, and if $H_n(w)\in S$ then $h(w)\in S_\theta^+$. Since $h(w)\in A$ if and only if $w\in f(A)$, and since $f(S_\theta^-)\subseteq B$, we have that
\[
Z_n\cap f(S_\theta^-)\subseteq\{w\in Z_n:H_n(w)\in S\}\subseteq Z_n\cap f(S_\theta^+).
\]
The sets $f(S_\theta^\pm)$ are compact, and their boundaries $f(\partial S_\theta^\pm)$ are images of rectifiable curves under a Lipschitz map, so they have measure zero. For a compact set $K\subseteq\R^2$ whose boundary has measure zero, the number of points of $Z_n=s_n^{-1}\Z^2$ in $K$, multiplied by $s_n^{-2}$, tends to $|K|$. (The squares of side length $s_n^{-1}$ centered at the points of $Z_n$ which meet $\partial K$ have total area tending to $|\partial K|=0$, and all other such squares lie either in $K$ or in its complement.) It follows that
\[
|f(S_\theta^-)|\le\liminf_{n\rar\infty}\frac{\#(\Lambda_n\cap s_nS)}{s_n^2}\le\limsup_{n\rar\infty}\frac{\#(\Lambda_n\cap s_nS)}{s_n^2}\le|f(S_\theta^+)|.
\]
Finally, $f(S_\theta^-)\subseteq f(S)\subseteq f(S_\theta^+)$, and $|f(S_\theta^+)\setminus f(S_\theta^-)|\le 2\pi L^2|S_\theta^+\setminus S_\theta^-|$, which tends to $0$ with $\theta$. This proves \eqref{eqn.CountLimit}.

\noindent\emph{Conclusion.} Applying \eqref{eqn.CountGap} to $\Lambda_n$ and the squares $s_nS$ and $s_nS'$, for adjacent squares $S$ and $S'$ of a rectangle in $\mc{C}$, dividing by $s_n^2$, and using \eqref{eqn.CountLimit}, we obtain
\[
\Big|\,|f(S')|-|f(S)|\,\Big|\ge c\,|S|.
\]
This holds for every pair of adjacent squares of every rectangle in $\mc{C}$, and $f$ is an $L$-BL homeomorphism of $\R^2$, contradicting Proposition \ref{prop.NoHomeo}.
\end{proof}

We emphasize that $s_*$ depends only on $L$, $c$, $N$, $J$, and $b$, and not on the set $\Lambda$. When we construct our window in Section \ref{sec.Proof}, the parameters $c$, $N$, and $J$ will be chosen for each value of $L$ in turn, and the scale of the corresponding copy of $\mc{C}$ will be taken larger than $s_*$ with $b=L$. %We tie $b$ to $L$ in this way because the covering radius of $\Lambda_\balph(W)$ is not known while the construction is in progress. This causes no difficulty, since in the proof of Theorem \ref{thm.Plane} we will only need the configuration for one value of $L$, which can be taken at least as large as this covering radius.

\section{Counting along rows, and strip families}\label{sec.Counting}

In this section we collect the facts about the sets $\{m\in\Z^2:\langle\balph,m\rangle\in U\}$, for $U\subseteq\R/\Z$, which are needed for the construction. The first two lemmas count points of these sets in large convex regions. The key feature of these estimates is that they depend only on $\alpha_1$, and not on $\alpha_2$ or on the position of $U$. This is what will allow us to fix the scales of our construction before $\alpha_2$ and the phases of the window have been chosen. After the counting lemmas we give the proof of Corollary \ref{cor.Encode}, which needs only Lemma \ref{lem.RowCount} and the results of Section \ref{sec.NormalForm}. The last two lemmas describe the plane patterns produced by a particular kind of window.

Our counting estimates are based on the following uniform version of Weyl's equidistribution theorem for irrational rotations (cf. \cite[Chapter 1]{KuipNied1974}). We include the short proof, to make clear that the bound depends only on $\alpha_1$ and $\epsilon$.

\begin{lemma}\label{lem.Weyl}
Let $\alpha_1\in\R\setminus\Q$ and $\epsilon\in(0,1)$. Then there is an integer $N_\epsilon\ge 1$ such that, for every $N\ge N_\epsilon$, every $\zeta\in\R$, and every arc $I\subseteq\R/\Z$,
\[
\left|\#\{0\le j<N:\zeta+j\alpha_1\in I\}-N|I|\right|\le\epsilon N.
\]
\end{lemma}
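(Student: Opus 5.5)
We will prove the bound by contradiction, using a compactness argument. This is the standard way to make Weyl's theorem uniform in $\zeta$ and in the arc. The key input is that, for any fixed $h\in\Z\setminus\{0\}$, the exponential sums
\[
\frac1N\sum_{j=0}^{N-1}e^{2\pi i h(\zeta+j\alpha_1)}
\]
have absolute value at most $\frac{1}{N}\cdot\frac{2}{|e^{2\pi i h\alpha_1}-1|}$. This bound holds uniformly in $\zeta$, because the sum is geometric and $h\alpha_1\notin\Z$. By Weierstrass approximation, the averages of any continuous function $\phi$ on $\R/\Z$ along orbit segments of length $N$ therefore converge to $\int\phi$ uniformly in the starting point $\zeta$.

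Next we pass from continuous functions to arcs. Fix $\epsilon$ and put $\delta=\epsilon/4$. We first handle the case $|I|\le 1-2\delta$. For such an arc, choose continuous functions $\phi^-\le \mathbf 1_I\le\phi^+$ that are piecewise linear and satisfy $\int(\phi^+-\phi^-)\le 2\delta$. Arcs of length greater than $1-2\delta$ can be treated through their complements, or directly by sandwiching between the constant $1$ and a minorant. The difficulty is that there are uncountably many arcs. To get uniformity, we use a finite net. Let $K=\lceil 1/\delta\rceil$, and consider the finitely many arcs whose endpoints lie in $\frac1K\Z/\Z$. Every arc $I$ is sandwiched between two such arcs $I^-\subseteq I\subseteq I^+$ with $|I^+\setminus I^-|\le 2/K\le 2\delta$. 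For each of the finitely many net arcs $J$, we fix continuous approximants $\phi_J^\pm$ with $\int(\phi_J^+-\phi_J^-)\le\delta$. The uniform convergence above then gives an $N_J$ such that, for all $N\ge N_J$ and all $\zeta$, the averages of $\phi_J^\pm$ lie within $\delta$ of their integrals. Taking $N_\epsilon=\max_J N_J$, we obtain for $N\ge N_\epsilon$
\[
\#\{j<N:\zeta+j\alpha_1\in I\}\le\sum_{j<N}\phi^+_{I^+}(\zeta+j\alpha_1)\le N\Big(\int\phi^+_{I^+}+\delta\Big)\le N(|I^+|+2\delta)\le N(|I|+4\delta).
\]
The lower bound follows in the same way, using $I^-$ and $\phi^-_{I^-}$. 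Since $4\delta=\epsilon$, this is the claimed estimate.

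The only delicate point is bookkeeping. The approximants must be chosen for the finite net of arcs, not for $I$ itself, so that $N_\epsilon$ depends only on $\alpha_1$ and $\epsilon$. Degenerate arcs also need care: the empty arc, single points, and the full circle, where the sandwich is trivial. Arcs may be open, closed, or half-open, but this is irrelevant because the sandwich inclusions hold for each of these types. The final constant depends on $\alpha_1$ only through the finitely many quantities $|e^{2\pi i h\alpha_1}-1|^{-1}$ that appear in the trigonometric approximations of the $\phi_J^\pm$. This makes explicit that the bound is uniform in $\zeta$ and $I$, as the later counting lemmas require.
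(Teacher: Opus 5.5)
Your proof is correct, but it takes a different route from the paper's.

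\textbf{Your route.} You get uniformity in $\zeta$ from the geometric-sum bound in Weyl's criterion. You pass to continuous test functions by trigonometric approximation. You get uniformity in $I$ from a finite net of arcs with endpoints in $\frac1K\Z/\Z$, using the sandwich $I^-\subseteq I\subseteq I^+$ with $|I^+\setminus I^-|\le 2/K$. The bookkeeping leading to $|I|\pm4\delta=|I|\pm\epsilon$ checks out.

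\textbf{The paper's route.} The paper avoids Fourier analysis altogether. It picks a single $q$ with $0<\kappa=\|q\alpha_1\|<\epsilon/2$ and splits $\{0,\ldots,N-1\}$ into residue classes modulo $q$. Each class is then an arithmetic progression on the circle with small step $\kappa$, and the paper counts its points in $I$ directly, component by component. This gives the explicit error $q(1/\kappa+3)+N\kappa$ and the explicit threshold $N_\epsilon=\lceil 2q(1/\kappa+3)/\epsilon\rceil$.

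\textbf{What each buys.} The paper's argument is shorter and fully elementary. It also makes the dependence on $\alpha_1$ transparent through the single pair $(q,\kappa)$. Yours is the standard textbook argument, and its $N_\epsilon$ is only implicit: it depends on the degrees of the trigonometric approximants and on $|e^{2\pi i h\alpha_1}-1|^{-1}$ for the frequencies $h$ that occur. In exchange, it carries over essentially unchanged to Kronecker sequences in higher-dimensional tori with boxes, where the residue-class trick is less direct. For the later use in Lemma \ref{lem.RowUnif}, only the existence of $N_\epsilon$ depending on $\alpha_1$ and $\epsilon$ matters, so both suffice.

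\textbf{Two presentational points.} First, your opening announces a proof by contradiction via compactness, but what you actually give is a direct argument. Second, the case split according to whether $|I|\le 1-2\delta$ is unnecessary once you use the net. You only need the net to contain the empty arc, the full circle, and the open, closed, and half-open versions of each arc. Then you can always take $I^-$ open and $I^+$ closed, whatever the type of $I$.
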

\begin{proof}
Choose an integer $q\ge 1$ with $0<\|q\alpha_1\|<\epsilon/2$, and write $\kappa=\|q\alpha_1\|$. Splitting $\{0,\ldots,N-1\}$ into its residue classes modulo $q$, each class consists of $K\le N/q+1$ integers $j_0,j_0+q,\ldots,j_0+(K-1)q$, and modulo $1$ the corresponding points $\zeta+j\alpha_1$ form an arithmetic progression with common difference $\pm\kappa$. After a reflection we may assume that the common difference is $\kappa$, and we regard the points as the real numbers $x_0+i\kappa$, $0\le i<K$, which lie in an interval of length $X=(K-1)\kappa$. The intersection of this interval with $I+\Z$ has measure within $|I|\le 1$ of $X|I|$, and it has at most $X+2$ connected components. A component of length $\ell$ contains between $\ell/\kappa-1$ and $\ell/\kappa+1$ points of the progression. It follows that the number of points of the class in $I$ is $(K-1)|I|$, up to an error of at most $1/\kappa+X+2$, and hence it is $K|I|$ up to an error of at most $1/\kappa+K\kappa+3$. Summing over the $q$ classes, the total error is at most $q(1/\kappa+3)+N\kappa$. Since $\kappa<\epsilon/2$, this is at most $\epsilon N$ as soon as $N\ge N_\epsilon=\lceil 2q(1/\kappa+3)/\epsilon\rceil$.
\end{proof}

The next lemma is the main counting estimate. For a finite union of arcs it holds uniformly in $\alpha_2$ and in the position of the arcs. In the statement and below, a \textit{union of $\nu$ arcs} means a union of at most $\nu$ arcs of $\R/\Z$, each of which may be open, closed, or half-open.

\begin{lemma}\label{lem.RowUnif}
Let $\alpha_1\in\R\setminus\Q$, $\epsilon\in(0,1)$, and let $\nu\ge 1$ be an integer. Then there is an $r\ge 1$, depending only on $\alpha_1$, $\epsilon$, and $\nu$, with the following property. For every $\alpha_2\in\R$, every union $U$ of $\nu$ arcs, and every compact convex set $K\subseteq\R^2$ which contains a closed disc of radius $r$, we have that
\[
\left|\#\left\{m\in\Z^2\cap K:\langle\balph,m\rangle\in U\right\}-|U||K|\right|\le\epsilon|K|.
\]
\end{lemma}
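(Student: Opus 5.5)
We prove the bound one row at a time, using Lemma \ref{lem.Weyl}. First reduce to a single arc. Lemma \ref{lem.Weyl} is stated for arcs, and a union $U$ of $\nu$ arcs is a disjoint union of at most $\nu$ arcs, with the endpoint types not mattering up to an error we can absorb. So it suffices to prove the estimate for a single arc $I$ with $\epsilon$ replaced by $\epsilon/\nu$, then sum. Alternatively, we can apply Lemma \ref{lem.Weyl} to each arc with $\epsilon/(2\nu)$ and sum the errors directly along each row. In either case set $\epsilon'=\epsilon/(4\nu)$, and let $N_{\epsilon'}$ be given by Lemma \ref{lem.Weyl}. This depends only on $\alpha_1$ and $\epsilon'$. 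Half-open or open arcs are handled by noting that Lemma \ref{lem.Weyl} applies to closed arcs and to their interiors. A degenerate arc is a limit of small closed arcs, so its count is at most $\epsilon' N$ more than the closed-arc estimate allows, which is still fine.

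Next, slice $K$ into rows. For each integer $k$, the set $K\cap(\R\times\{k\})$ is a segment, since $K$ is convex. It contains some number $N_k$ of consecutive integers $m_1$, and $N_k$ differs from the length $\ell_k$ of the segment by at most $1$. On this row, $\langle\balph,m\rangle=k\alpha_2+m_1\alpha_1$. Taking $\zeta=k\alpha_2+m_1^{(0)}\alpha_1$, where $m_1^{(0)}$ is the leftmost integer in the segment, Lemma \ref{lem.Weyl} gives a count of $N_k|U|$ up to $\nu\epsilon' N_k$ whenever $N_k\ge N_{\epsilon'}$. This is uniform in $\alpha_2$ and in the position of $U$, which is the point of the lemma. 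For short rows, with $N_k<N_{\epsilon'}$, we use the trivial bound: the error is at most $N_k\le N_{\epsilon'}$. Summing over rows, the total error is at most
\[
\nu\epsilon'\sum_k N_k+\#\{k:\text{row meets }K\}\cdot(N_{\epsilon'}+2).
\]

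It remains to compare these sums with $|K|$. Here $\sum_k N_k\le\sum_k(\ell_k+1)$, and $\sum_k\ell_k$ is a Riemann sum for $|K|=\int\ell(y)\,dy$. Since $\ell$ is concave on the projection interval $[y_0,y_1]$ of $K$, we get $|\sum_k\ell_k-|K||\le 2\max\ell\le 2\,\mathrm{diam}\,K$. The number of rows is at most $\mathrm{diam}\,K+1$. The main point, and the only slightly delicate step, is bounding $\mathrm{diam}\,K$ by a small multiple of $|K|$. This is where the disc of radius $r$ enters. A convex set containing a disc $D$ of radius $r$ and a point $p$ at distance $d$ from the centre of $D$ contains the convex hull of $D\cup\{p\}$, which has area at least $rd$. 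Hence $|K|\ge r\,\mathrm{diam}\,K/2$ and also $|K|\ge\pi r^2$. Consequently all of the boundary and short-row errors are $O\big((N_{\epsilon'}+3)\,\mathrm{diam}\,K\big)\le O\big((N_{\epsilon'}+3)|K|/r\big)$. The main term is $|U|\sum_k N_k=|U||K|+O(\mathrm{diam}\,K)$, and the Weyl error is $\nu\epsilon'(|K|+O(\mathrm{diam}\,K))$. Choosing $r$ large in terms of $N_{\epsilon'}$ and $\epsilon$, for example $r\ge 100(N_{\epsilon'}+3)/\epsilon$, makes the total error at most $\epsilon|K|$. I expect no real obstacle. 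The care needed is in bookkeeping the endpoint types of the arcs and in the geometric inequality $\mathrm{diam}\,K\lesssim|K|/r$.
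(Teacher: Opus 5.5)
Your proof is correct and is essentially the paper's argument: you apply Lemma~\ref{lem.Weyl} row by row to each component of $U$ with $\epsilon/(4\nu)$, bound the short rows trivially, and use the area bound $|K|\ge r\cdot\mathrm{diam}\,K/2$ obtained from the convex hull of the disc and a far point. The differences are only cosmetic. You bound the number of short rows by the total number of rows, which suffices because $N_{\epsilon'}(\mathrm{diam}\,K+1)=O(N_{\epsilon'}|K|/r)$, whereas the paper uses concavity of the chord length to confine short rows near the top and bottom of $K$. You also compare $\sum_k N_k$ with $|K|$ by a Riemann-sum argument for the concave chord-length function, where the paper uses a perimeter bound.
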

\begin{proof}
If $U=\R/\Z$ there is nothing to prove beyond the lattice point estimate at the end of the proof, so we assume that $U\not=\R/\Z$. Then $U$ is a disjoint union of at most $\nu$ arcs (its components). Let $N_1=N_{\epsilon/(4\nu)}$ be the integer given by Lemma \ref{lem.Weyl}, with $\epsilon/(4\nu)$ in place of $\epsilon$. Let $h$ be the height of $K$ (the length of its projection to the vertical axis), and write $n_K=\#(\Z^2\cap K)$.

First we record two geometric facts. Since $K$ contains a disc of radius $r$, one of the two points of $K$ with extreme vertical coordinates is at distance at least $h/2$ from the center of that disc, and the convex hull of the disc and that point has area at least $rh/2$. Also, the perimeter $\mr{per}(K)$ satisfies $\mr{per}(K)\le 2|K|/r$, because $|K|$ is half of the integral, over $\partial K$, of the distance from the center of the disc to the tangent line of $K$, and this distance is at least $r$. So
\begin{equation}\label{eqn.ConvexFacts}
h\le\frac{2|K|}{r}\quad\text{and}\quad\mr{per}(K)\le\frac{2|K|}{r}.
\end{equation}

For $k\in\Z$ let $n_k$ be the number of points of $\Z^2\cap K$ on the horizontal line $m_2=k$. Since $K$ is convex, these points are $(a_k+j,k)$, $0\le j<n_k$, for some integer $a_k$, and their values of $\langle\balph,m\rangle$ are $\zeta+j\alpha_1$, with $\zeta=a_k\alpha_1+k\alpha_2$. If $n_k\ge N_1$, then applying Lemma \ref{lem.Weyl} to each component of $U$ shows that the number of these points with $\langle\balph,m\rangle\in U$ is within $\epsilon n_k/4$ of $|U|n_k$. Call the line $m_2=k$ \textit{short} if $0<n_k<N_1$. The chord of $K$ on a short line has length less than $N_1+1$. The length of the horizontal chord of $K$ at height $y$ is a concave function of $y$, defined on an interval of length $h$, with maximum value at least $2r$. By concavity it is at least $2r$ times the distance from $y$ to the nearer endpoint of this interval, divided by $h$. So the heights of the short lines lie in two intervals of length at most $(N_1+1)h/(2r)$, and there are at most $(N_1+1)h/r+2$ short lines. Each short line contains fewer than $N_1$ points. A point on a short line contributes $0$ or $1$ to the count in the statement, and $|U|\le 1$ to $|U|n_K$, so it changes the difference between these two quantities by at most $1$. Combining these observations with \eqref{eqn.ConvexFacts}, and using $|K|\ge\pi r^2$, we have that
\begin{align*}
\left|\#\left\{m\in\Z^2\cap K:\langle\balph,m\rangle\in U\right\}-|U|n_K\right|&\le\frac{\epsilon n_K}{4}+N_1\left(\frac{(N_1+1)h}{r}+2\right)\\
&\le\frac{\epsilon n_K}{4}+\frac{2N_1(N_1+2)}{r^2}|K|.
\end{align*}

Finally we compare $n_K$ with $|K|$. The unit squares centered at the points of $\Z^2$ which meet $\partial K$ lie in the $\sqrt{2}$-neighborhood of $\partial K$. For a convex set, this neighborhood has area at most $2\sqrt{2}\,\mr{per}(K)+2\pi$, and every other unit square centered at a lattice point lies in $K$ or in its complement. So, by \eqref{eqn.ConvexFacts} and $|K|\ge\pi r^2$,
\[
|n_K-|K||\le 2\sqrt{2}\,\mr{per}(K)+2\pi\le\left(\frac{4\sqrt{2}}{r}+\frac{2}{r^2}\right)|K|\le\frac{8|K|}{r}.
\]
Putting everything together, the difference in the statement is at most
\[
\left(\frac{\epsilon}{4}\left(1+\frac{8}{r}\right)+\frac{2N_1(N_1+2)}{r^2}+\frac{8}{r}\right)|K|,
\]
which is at most $\epsilon|K|$ if $r\ge\max\left(32/\epsilon,\left(8N_1(N_1+2)/\epsilon\right)^{1/2}\right)$.
\end{proof}

The next lemma, which we use to prove Corollary \ref{cor.Encode}, and again in Section \ref{sec.Proof}, follows from Lemma \ref{lem.RowUnif} by approximating $U$ from inside and outside by finite unions of arcs.

\begin{lemma}\label{lem.RowCount}
Let $\alpha_1\in\R\setminus\Q$, $\alpha_2\in\R$, and let $U\subseteq\R/\Z$ be a set with $|\partial U|=0$. Then, for every $\epsilon>0$, there is an $r>0$ such that every compact convex set $K\subseteq\R^2$ which contains a closed disc of radius $r$ satisfies
\[
\left|\#\left\{m\in\Z^2\cap K:\langle\balph,m\rangle\in U\right\}-|U||K|\right|\le\epsilon|K|.
\]
\end{lemma}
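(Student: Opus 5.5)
The plan is to sandwich $U$ between two finite unions of arcs whose measures are close to $|U|$, and then apply Lemma \ref{lem.RowUnif} to each of them. Fix $\epsilon>0$; we may assume $\epsilon<1$.

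\emph{Step 1: the sandwich.} The closure $\bar U$ and the interior $U^\circ$ differ from $U$ only within $\partial U$, which has measure zero. Hence $|U^\circ|=|U|=|\bar U|$. The complement of $\bar U$ is open, so it is a countable disjoint union of open arcs. Let $U^+$ be the complement of a finite subunion of these arcs, chosen so that the omitted arcs of $\R/\Z\setminus\bar U$ have total length less than $\epsilon/3$. Then $U^+$ is a finite union of closed arcs, $U^+\supseteq\bar U\supseteq U$, and $|U^+|\le|U|+\epsilon/3$. Similarly, $U^\circ$ is a countable disjoint union of open arcs, and we let $U^-$ be a finite subunion with $|U^-|\ge|U|-\epsilon/3$. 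Then $U^-\subseteq U$. Let $\nu$ be the larger of the numbers of arcs in $U^-$ and $U^+$. If $U$ or its complement is all of $\R/\Z$, we take $U^\pm$ to be $\R/\Z$ or empty as appropriate. Either way, $\nu$ depends only on $U$ and $\epsilon$.

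\emph{Step 2: apply Lemma \ref{lem.RowUnif}.} Take $r$ to be the radius given by Lemma \ref{lem.RowUnif} with $\epsilon/3$ in place of $\epsilon$ and with this $\nu$. Let $K$ be compact convex and contain a closed disc of radius $r$. By monotonicity,
\[
\#\{m\in\Z^2\cap K:\langle\balph,m\rangle\in U^-\}\le\#\{m\in\Z^2\cap K:\langle\balph,m\rangle\in U\}\le\#\{m\in\Z^2\cap K:\langle\balph,m\rangle\in U^+\}.
\]
The outer counts lie within $(\epsilon/3)|K|$ of $|U^\pm||K|$. Combining this with $\big||U^\pm|-|U|\big|\le\epsilon/3$ gives the bound $\tfrac{2\epsilon}{3}|K|\le\epsilon|K|$ on both sides.

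\emph{Main obstacle.} The only genuine point is Step 1. We have to check that $|\partial U|=0$ really yields finite unions of arcs approximating $U$ from both sides in measure. This uses the structure of open subsets of the circle as countable unions of disjoint arcs, together with countable additivity. No uniformity in $\alpha_2$ is needed here, which is why $r$ may depend on $U$ and $\alpha_2$. Everything else is supplied by Lemma \ref{lem.RowUnif}.
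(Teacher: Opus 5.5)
Your proof is correct and takes essentially the paper's approach: sandwich $U$ between two finite unions of arcs whose measures are within a small multiple of $\epsilon$ of $|U|$, then apply Lemma \ref{lem.RowUnif} to both with a smaller $\epsilon$. The only difference is how the sandwich is built. The paper covers the compact null set $\partial U$ by finitely many open arcs of total length less than $\epsilon/4$ and reads off both approximants from that single cover, whereas you truncate the component decompositions of $U^\circ$ and of $(\R/\Z)\setminus\bar U$ using countable additivity, which works equally well.
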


\begin{proof}
Assume without loss of generality that $\epsilon<1$. If $\partial U=\emptyset$ then $U$ is empty or all of $\R/\Z$, and the result follows from Lemma \ref{lem.RowUnif} with $\nu=1$. Otherwise, $\partial U$ is a compact set of measure zero, so it is covered by a finite union $O$ of open arcs with $|O|<\epsilon/4$. The closed set $(\R/\Z)\setminus O$ is a finite union of closed arcs, and each of these arcs is connected and disjoint from $\partial U$, so it lies either in the interior of $U$ or in the complement of the closure of $U$. Let $U_-$ be the union of the closed arcs from this collection which lie in the interior of $U$, and let $U_+=U_-\cup O$. Then $U_-\subseteq U\subseteq U_+$, both $U_-$ and $U_+$ are finite unions of arcs, and $|U_+|-|U_-|\le|O|<\epsilon/4$. Applying Lemma \ref{lem.RowUnif} to $U_-$ and $U_+$, with $\epsilon/4$ in place of $\epsilon$, gives the result.
\end{proof}

\noindent\emph{Proof of Corollary \ref{cor.Encode}, assuming Theorem \ref{thm.Plane}:} Let $\balph$ and $W$ be as in Theorem \ref{thm.Plane}, write $Y=Y_\balph(W)$ and $\Lambda=\Lambda_\balph(W)$, let $\Psi$ be as in Lemma \ref{lem.NormalForm}, and let $U\subseteq\R/\Z$ be the image of $W$ in $\R/\Z$, so that $\Lambda=\{m\in\Z^2:\langle\balph,m\rangle\in U\}$. Since $W$ lies in an interval of length less than $1$, we have that $|U|=|W|$, and $|\partial U|=|\partial W|=0$, since $\partial W$ is countable. For $r>0$ and $A\subseteq\R^2$ we write $\mc{N}_r(A)$ for the set of points at distance at most $r$ from $A$.

Let $Q\subseteq V_\balph$ be a closed square of side length $R\ge 1$, and let $P=\mr{pr}(Q)$. Then $P$ is a parallelogram with $|P|=R^2/|\bn_\balph|$, it contains a closed disc of radius $c_1R$, and its perimeter is at most $c_2R$, where $c_1,c_2>0$ depend only on $\balph$. By Lemma \ref{lem.NormalForm}, if $y\in Y$ then $\Psi(y)$ lies within distance $s_W$ of $\mr{pr}(y)$. Therefore, if $y\in Y\cap Q$ and $\Psi(y)\notin P$, then $\Psi(y)\in\mc{N}_{s_W}(\partial P)$. In the same way, if $m\in\Lambda\cap P$ and $\Psi^{-1}(m)\notin Q$, then $m\in\mc{N}_{s_W}(\partial P)$. It follows that
\[
\left|\#(Y\cap Q)-\#(\Lambda\cap P)\right|\le\#\left(\Z^2\cap\mc{N}_{s_W}(\partial P)\right)\le c_3R,
\]
where $c_3$ depends only on $\balph$ and $W$. Now let $\epsilon>0$. By Lemma \ref{lem.RowCount}, applied with $\epsilon/2$ in place of $\epsilon$, for all sufficiently large $R$ we have that
\[
\left|\#(\Lambda\cap P)-|U||P|\right|\le\frac{\epsilon|P|}{2}\le\frac{\epsilon R^2}{2}.
\]
Since $|U||P|=|W|R^2/|\bn_\balph|=\delta R^2$, combining the last two displays proves the first statement of the corollary.

For the second statement, identify $V_\balph$ with $\R^2$ by a linear isometry, and suppose that $r_k$, $Q_k$, and $\varphi_k$ are as in the definition of encoding. If $S\subseteq[0,1]^2$ is a closed axis-parallel square of side length $1/T$, for an integer $T\ge 1$, then, since $\varphi_k$ is a similarity with ratio $r_k$, the set $\varphi_k(S)$ is a closed square of side length $r_k/T$. By what we have just proved,
\[
\mu_k(S)=\frac{\#(Y\cap\varphi_k(S))}{r_k^2}\rar\frac{\delta}{T^2}=\delta|S|\quad\text{as}\quad k\rar\infty.
\]
Since $Y$ is uniformly discrete, the number of points of $Y$ on the boundaries of the squares $\varphi_k(S)$, for $S$ running over the $T^2$ squares of the grid of side length $1/T$ in $[0,1]^2$, is $O_T(r_k)$. Approximating a continuous function on $[0,1]^2$ uniformly by functions which are constant on the open squares of such a grid, it follows that $\mu_k$ converges weakly to $\delta\,\mr{d}u$. Therefore, up to equality almost everywhere, the only density which $Y$ can encode is the constant function $\delta$, which is the Jacobian of the map $u\mapsto\delta^{1/2}u$. This completes the proof of Corollary \ref{cor.Encode}, assuming Theorem \ref{thm.Plane}.

Now we consider the patterns in the plane produced by windows of a particular kind. For an integer $Q\ge 1$, a number $w\in(0,1)$, and $\gamma\in\R$, we write
\begin{align*}
U(Q,w,\gamma)&=\{x\in\R/\Z:\|Qx-\gamma\|<w/2\},\\
U^>(Q,w,\gamma)&=\{x\in\R/\Z:\|Qx-\gamma\|>w/2\}.
\end{align*}
The first of these is a union of $Q$ open arcs of length $w/Q$, equally spaced around the circle, and the second is the interior of its complement. We refer to them, respectively, as the \textit{stage} with modulus $Q$, width $w$, and phase $\gamma$, and its complementary stage. In the plane, for $\bm{\omega}\in\R^2\setminus\{0\}$, we write
\begin{align*}
G(\bm{\omega},w,\gamma)&=\{x\in\R^2:\|\langle\bm{\omega},x\rangle-\gamma\|<w/2\},\\
G^>(\bm{\omega},w,\gamma)&=\{x\in\R^2:\|\langle\bm{\omega},x\rangle-\gamma\|>w/2\}.
\end{align*}
The set $G(\bm{\omega},w,\gamma)$ is a family of parallel open strips with normal vector $\bm{\omega}$, spaced $1/|\bm{\omega}|$ apart, each of width $w/|\bm{\omega}|$, and $G^>(\bm{\omega},w,\gamma)$ is the interior of its complement. We refer to these as \textit{strip families}.

\begin{lemma}\label{lem.Stage}
Let $\balph\in\R^2$, let $Q\ge 1$ be an integer, and write $Q\balph=\bm{\mu}+\bm{\omega}$ with $\bm{\mu}\in\Z^2$ and $\bm{\omega}\in\R^2\setminus\{0\}$. Then, for every $w\in(0,1)$, $\gamma\in\R$, and $m\in\Z^2$, we have that $\langle\balph,m\rangle\in U(Q,w,\gamma)$ if and only if $m\in G(\bm{\omega},w,\gamma)$, and $\langle\balph,m\rangle\in U^>(Q,w,\gamma)$ if and only if $m\in G^>(\bm{\omega},w,\gamma)$.   Consequently, if a set $U\subseteq\R/\Z$ is obtained from finitely many stages and complementary stages by unions and intersections, and $E\subseteq\R^2$ is obtained from the corresponding strip families by the same operations, then \[\{m\in\Z^2:\langle\balph,m\rangle\in U\}=\Z^2\cap E.\]
\end{lemma}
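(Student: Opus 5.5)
The plan is to prove the pointwise equivalence by a direct computation with the linear form, and then to get the set-theoretic consequence by induction on how $U$ is built from stages. Fix $m\in\Z^2$. The key identity is
\[
Q\langle\balph,m\rangle=\langle Q\balph,m\rangle=\langle\bm{\mu},m\rangle+\langle\bm{\omega},m\rangle,
\]
and $\langle\bm{\mu},m\rangle\in\Z$ because both $\bm{\mu}$ and $m$ are integer vectors. The function $\|\cdot\|$ is $1$-periodic, so
\[
\|Q\langle\balph,m\rangle-\gamma\|=\|\langle\bm{\omega},m\rangle-\gamma\|.
\]
We also need that the condition $\|Qx-\gamma\|<w/2$ is well defined for $x\in\R/\Z$. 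This holds because replacing $x$ by $x+1$ changes $Qx$ by the integer $Q$.

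With these two facts the first equivalence is immediate. The point $\langle\balph,m\rangle$ lies in $U(Q,w,\gamma)$ exactly when $\|Q\langle\balph,m\rangle-\gamma\|<w/2$. By the identity above, this says $\|\langle\bm{\omega},m\rangle-\gamma\|<w/2$, which is the definition of $m\in G(\bm{\omega},w,\gamma)$. The same computation, with $>$ in place of $<$, gives the statement for $U^>$ and $G^>$. Note that $\bm{\omega}\neq0$ is needed only so that $G$ is a genuine family of strips; the computation itself does not use it.

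For the consequence, define for any $U\subseteq\R/\Z$ the set
\[
\Phi(U)=\{m\in\Z^2:\langle\balph,m\rangle\in U\}.
\]
This is the preimage of $U$ under the map $m\mapsto\langle\balph,m\rangle$ modulo $1$. Preimages commute with unions and intersections, so $\Phi(U_1\cup U_2)=\Phi(U_1)\cup\Phi(U_2)$, and likewise for intersections. The map $E\mapsto\Z^2\cap E$ also commutes with unions and intersections.

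Now induct on the number of operations used to build $U$. In the base case $U$ is a single stage or complementary stage, and $\Phi(U)=\Z^2\cap E$ is exactly the first part of the lemma. For the inductive step, suppose $U=U_1\cup U_2$ and $E=E_1\cup E_2$, where $\Phi(U_i)=\Z^2\cap E_i$. Then
\[
\Phi(U)=\Phi(U_1)\cup\Phi(U_2)=(\Z^2\cap E_1)\cup(\Z^2\cap E_2)=\Z^2\cap E.
\]
The case of intersections is identical.

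No step presents a real obstacle. The only points needing care are the well-definedness modulo $1$ and the fact that each stage may use its own modulus $Q$, and hence its own decomposition $Q\balph=\bm{\mu}+\bm{\omega}$. This causes no difficulty, because the base case is applied to each stage separately, with the $\bm{\omega}$ corresponding to its own $Q$.
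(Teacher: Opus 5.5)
Your proposal is correct and follows the paper's proof: the identity $Q\langle\balph,m\rangle=\langle\bm{\mu},m\rangle+\langle\bm{\omega},m\rangle$ with $\langle\bm{\mu},m\rangle\in\Z$ gives $\|Q\langle\balph,m\rangle-\gamma\|=\|\langle\bm{\omega},m\rangle-\gamma\|$, and the consequence follows because membership is preserved under unions and intersections. Your induction and your remarks on well-definedness modulo $1$ and on each stage having its own decomposition just spell out details the paper leaves implicit.
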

\begin{proof}
We have that $Q\langle\balph,m\rangle=\langle\bm{\mu},m\rangle+\langle\bm{\omega},m\rangle$, and $\langle\bm{\mu},m\rangle\in\Z$, so $\|Q\langle\balph,m\rangle-\gamma\|=\|\langle\bm{\omega},m\rangle-\gamma\|$. This gives the first statement, and the second follows because membership of $m$ is preserved by intersections and unions on both sides.
\end{proof}

The representation $Q\balph=\bm{\mu}+\bm{\omega}$ is not unique, and in our application $\bm{\omega}$ will be chosen to be a short vector, close to a prescribed target. The next lemma shows that, with $\alpha_1$ fixed, $\alpha_2$ can be selected so that this happens for any sequence of targets. The phase $\gamma$ of each stage is a free parameter, which will be chosen at the very end, after $\balph$ is known.

\begin{lemma}\label{lem.Prescribe}
Let $\alpha_1\in\R\setminus\Q$. Suppose that, for $v=1,2,\ldots$ in turn, we are given a vector $\bm{t}_v\in\R^2\setminus\{0\}$ and a number $\epsilon_v\in(0,1)$ with $\epsilon_v|\bm{t}_v|<1$, where these may depend on the choices made at the previous steps. Then integers $Q_v\ge 1$ can be chosen in turn, and there is an $\alpha_2\in \R$, such that $1,\alpha_1,$ and $\alpha_2$ are linearly independent over $\Q$ and, for every $v$, there is a $\bm{\mu}_v\in\Z^2$ with
\[
\left|\bm{\omega}_v-\bm{t}_v\right|\le\epsilon_v|\bm{t}_v|,\quad\text{where}\quad\bm{\omega}_v=Q_v\balph-\bm{\mu}_v.
\]
\end{lemma}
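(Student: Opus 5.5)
Fix $\alpha_1$ irrational. We construct $\alpha_2$ by nested closed intervals $I_0\supseteq I_1\supseteq\cdots$ with lengths tending to $0$, and take $\alpha_2$ to be the point in their intersection. Start with $I_0=[0,1]$. At step $v$, assume $I_{v-1}$ is a nondegenerate closed interval; the target $\bm{t}_v=(t_{v,1},t_{v,2})$ and $\epsilon_v$ are now given. We must choose $Q_v\ge 1$, $\bm{\mu}_v=(\mu_{v,1},\mu_{v,2})\in\Z^2$, and a subinterval $I_v\subseteq I_{v-1}$ such that for every $\alpha_2\in I_v$ we have $|Q_v\balph-\bm{\mu}_v-\bm{t}_v|\le\epsilon_v|\bm{t}_v|$.

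\emph{First coordinate.} By density of $\{Q\alpha_1\}$ modulo $1$ (Kronecker, using only irrationality of $\alpha_1$), the set of $Q\ge1$ with $\|Q\alpha_1-t_{v,1}\|<\epsilon_v|\bm{t}_v|/2$ is infinite. In particular it contains integers $Q$ with $Q|I_{v-1}|\ge 2$. Fix such a $Q_v$ and let $\mu_{v,1}$ be the integer nearest $Q_v\alpha_1-t_{v,1}$. The first coordinate of $\bm{\omega}_v-\bm{t}_v$ then has absolute value less than $\epsilon_v|\bm{t}_v|/2$. This holds regardless of $\alpha_2$.

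\emph{Second coordinate.} As $\alpha_2$ ranges over $I_{v-1}$, the number $Q_v\alpha_2$ ranges over an interval of length at least $2$. Hence there is an integer $\mu_{v,2}$ such that $\mu_{v,2}+t_{v,2}$ lies in $Q_v I_{v-1}$ at distance at least $1/2$ from the endpoints (shrink to the middle if necessary). Define
\[
I_v=\{\alpha_2:|Q_v\alpha_2-\mu_{v,2}-t_{v,2}|\le\delta_v\},\qquad \delta_v=\min\{\epsilon_v|\bm{t}_v|/2,\,1/4\}.
\]
This is a closed interval of length $2\delta_v/Q_v>0$ contained in $I_{v-1}$. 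For every $\alpha_2\in I_v$, both coordinates of $\bm{\omega}_v-\bm{t}_v$ are at most $\epsilon_v|\bm{t}_v|/2$, so its norm is at most $\epsilon_v|\bm{t}_v|/\sqrt2\le\epsilon_v|\bm{t}_v|$. Note that $\bm{\omega}_v\neq 0$, since $|\bm{t}_v|>\epsilon_v|\bm{t}_v|$.

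\emph{Linear independence.} This is the only real subtlety; we handle it by interleaving. Enumerate the countably many nontrivial triples $(a,b,c)\in\Z^3$ with $(b,c)\ne(0,0)$. At step $v$, after forming $I_v$, we shrink it to a closed subinterval avoiding the point $-(a+b\alpha_1)/c$ for the $v$-th triple. This is possible when $c\neq0$ because we remove one point from a nondegenerate interval. When $c=0$, the relation $a+b\alpha_1=0$ already fails by irrationality of $\alpha_1$, so nothing is needed. We also halve lengths so that $|I_v|\to0$.

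The intersection of the $I_v$ is then a single point $\alpha_2$. It satisfies every approximation condition, and no relation $a+b\alpha_1+c\alpha_2=0$ holds, so $1,\alpha_1,\alpha_2$ are linearly independent over $\Q$. Since each $Q_v$ and $\bm{\mu}_v$ depends only on data available at step $v$, the targets may depend on earlier choices, as the statement allows.

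The step needing care is choosing $Q_v$ large enough that $Q_vI_{v-1}$ has length at least $2$, which makes the second coordinate free. Kronecker's theorem gives infinitely many admissible $Q$, so this is not a real obstruction.
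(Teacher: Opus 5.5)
Your proof is correct and follows the paper's argument essentially step for step. You build nested intervals, choose $Q_v\ge 2/|I_{v-1}|$ with $\|Q_v\alpha_1-t_{v,1}\|$ small by density of $\Z\alpha_1$ modulo $1$, place a short interval for the second coordinate inside $I_{v-1}$, and at each step delete one point of $\Q+\Q\alpha_1$ (your enumeration of triples $(a,b,c)$ is the same as the paper's enumeration of that set); the only differences are cosmetic constants, such as your bound $\epsilon_v|\bm{t}_v|/\sqrt2$ versus the paper's $(5/4)^{1/2}\epsilon_v|\bm{t}_v|/2$.
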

\begin{proof}
Enumerate the countable set $\Q+\Q\alpha_1$ as $\rho_1,\rho_2,\ldots$, and let $I_0=[0,1]$. We will construct nested closed intervals $I_0\supseteq I_1\supseteq I_2\supseteq\cdots$ of positive length. Suppose that $I_{v-1}$ has been constructed, and let $\eta=\epsilon_v|\bm{t}_v|/2$. Choose an integer $Q_v$ with
\[
Q_v\ge\frac{2}{|I_{v-1}|}\quad\text{and}\quad\|Q_v\alpha_1-t_{v,1}\|<\eta,
\]
where $\bm{t}_v=(t_{v,1},t_{v,2})$. The set of $y\in\R$ with $\|Q_vy-t_{v,2}\|\le\eta/2$ is a union of closed intervals of length $\eta/Q_v$, centered at the points $(t_{v,2}+k)/Q_v$, $k\in\Z$, and since $|I_{v-1}|\ge 2/Q_v$, at least one of these intervals lies in $I_{v-1}$. Let $I_v$ be a closed subinterval of it, of positive length, which does not contain $\rho_v$.

Let $\alpha_2$ be the unique point in $\bigcap_vI_v$ (the lengths of the intervals tend to $0$, since $|I_v|\le\eta/Q_v\le 1/Q_v$ and $Q_v\ge 2/|I_{v-1}|$). For each $v$, choose $\mu_{v,1}\in\Z$ with $|Q_v\alpha_1-\mu_{v,1}-t_{v,1}|<\eta$ and $\mu_{v,2}\in\Z$ with $|Q_v\alpha_2-\mu_{v,2}-t_{v,2}|\le\eta/2$. Then $|\bm{\omega}_v-\bm{t}_v|<(5/4)^{1/2}\eta<2\eta=\epsilon_v|\bm{t}_v|$. Finally, $\alpha_2\not=\rho_v$ for every $v$, so $\alpha_2\notin\Q+\Q\alpha_1$, and since $\alpha_1\notin\Q$ this means that $1,\alpha_1,$ and $\alpha_2$ are linearly independent over $\Q$.
\end{proof}

\section{The window: proof of Theorem \ref{thm.Plane}}\label{sec.Proof}

Throughout this section we fix an irrational number $\alpha_1$, and for $m\in\Z^2$ we write $t_m$ for the image of $\langle\balph,m\rangle$ in $\R/\Z$. The window will be the closure of a union of pieces, one for each integer $L\ge 1$. The piece for $L$ is designed so that the set of $m$ with $t_m$ in that piece reproduces, inside a copy of the configuration of Proposition \ref{prop.Finite}, the area pattern which rules out $L$-BL bijections. Before giving the construction we describe the pattern for one value of $L$.

\noindent\emph{Geometry of the configuration.} Let $M\ge 2$ and $N\ge 1000$ be integers, with $N$ even, and let $J\ge 1$ and $s>0$. We consider the dilated configuration $s\,\mc{C}(M,N,J)$ of Section \ref{sec.Stretch}. For $j\ge 1$ let
\[
\ell_j=s(NM)^{1-j}\quad\text{and}\quad\sigma_j=\ell_j/N,
\]
so that the rectangles of level $j$ have length $\ell_j$ and height $\sigma_j$, their squares have side length $\sigma_j$, their marked points are spaced $\ell_{j+1}=\sigma_j/M$ apart, and $\sigma_{j+1}=\sigma_j/(NM)$. We write $\mr{reg}$ for the union of all of the rectangles in $s\,\mc{C}(M,N,J)$.

\begin{lemma}\label{lem.Config}
With the notation above, the following statements hold.
\begin{itemize}[itemsep=5bp,parsep=5bp, topsep=0bp]
\item[(i)] Every square of a rectangle of level $j$ has the form $[k\sigma_j,(k+1)\sigma_j]\times[y,y+\sigma_j]$, with $k\in\Z$ and $y\in\ell_j\Z$, and if it is the $i$th square of its rectangle then $i\equiv k+1~\mathrm{mod}~2$. The rows of a rectangle of level $j$ lie on lines $\{x_2\in\ell_{j+1}\Z\}$.
\item[(ii)] Every rectangle of level $j'>j$ lies in the set $\{x\in\R^2:\mr{dist}(x_2,\ell_{j+1}\Z)\le 1.001\,\sigma_{j+1}\}$.
\item[(iii)] Every $x\in\mr{reg}$ satisfies $|x|\le 1.001\,s$.
\end{itemize}
\end{lemma}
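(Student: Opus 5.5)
We will prove all three statements by induction on the level $j$, after first recording the positions of the base points of the rectangles. The claim to establish first is that every rectangle $R=\xi+\ell_jR_N$ of level $j$ has base point $\xi\in\ell_j\Z^2$. For $j=1$ we have $\xi=0$ and $\ell_1=s$. For the inductive step, suppose $\xi\in\ell_j\Z^2$ for a rectangle of level $j$. Its marked points are $\xi+\ell_j(p+M(i-1),q)/(NM)=\xi+\ell_{j+1}(p+M(i-1),q)$. Since $\ell_j=NM\ell_{j+1}$, these lie in $\ell_{j+1}\Z^2$. Each rectangle of level $j+1$ is based at such a marked point, which closes the induction.

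Statement (i) follows directly from this. The $i$th square of $\xi+\ell_jR_N$ is $\xi+[(i-1)\sigma_j,i\sigma_j]\times[0,\sigma_j]$. Writing $\xi=(a\ell_j,b\ell_j)$, its left edge is at $(aN+i-1)\sigma_j$, so $k=aN+i-1$. Because $N$ is even, $k\equiv i-1\equiv i+1 \pmod 2$. Its bottom edge is $y=b\ell_j\in\ell_j\Z$. The rows are at heights $b\ell_j+q\ell_{j+1}\in\ell_{j+1}\Z$.

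For (ii), a rectangle of level $j+1$ sits at height $y_0\in\ell_{j+1}\Z$ and occupies the band $[y_0,y_0+\sigma_{j+1}]$ vertically. A rectangle of level $j'>j+1$ is contained in a bounded neighbourhood of its ancestor at level $j+1$. Each step down one level adds a height of at most $\sigma_{j'}$. Since $\sigma_{i+1}=\sigma_i/(NM)$, summing the geometric series gives a total vertical extent above $y_0$ of at most
\[
\sigma_{j+1}\sum_{i\ge0}(NM)^{-i}\le \sigma_{j+1}\bigl(1+2/(NM)\bigr)\le1.001\,\sigma_{j+1},
\]
using $N\ge1000$. Rectangles always extend upward or rightward from their base points, and base points of descendants lie inside their ancestors. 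Every point is therefore within $1.001\,\sigma_{j+1}$ above a line of $\ell_{j+1}\Z$, and its distance to $\ell_{j+1}\Z$ is at most that.

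Statement (iii) uses the same kind of bound. The level-1 rectangle lies in $[0,s]\times[0,s/N]$. Each descendant level extends at most $\ell_{j'}$ horizontally and $\sigma_{j'}$ vertically beyond its parent. Summing these extensions gives horizontal extent at most $s(1+2/(NM))$ and vertical extent at most $1.01s/N$. Hence $|x|\le s\sqrt{(1.002)^2+10^{-5}}\le1.001\,s$, possibly after a slightly sharper tally of the horizontal overhang.

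I expect the main difficulty to be this bookkeeping in (iii). Naively adding the horizontal overshoots gives a factor around $1+1/(NM)+\dots$, and this has to be combined carefully with the vertical term to stay within $1.001$. The first thing I would try is to note that children of a rectangle based at the rightmost marked points in fact extend beyond it by only $\ell_{j+1}$, so the horizontal excess is at most $s/(NM-1)\le s/1999$. Then $|x|^2\le s^2\bigl((1+1/1999)^2+(1.01/1000)^2\bigr)$, which is below $(1.001s)^2$.
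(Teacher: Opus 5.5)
Your proof is correct and follows essentially the paper's route: base points in $\ell_j\Z^2$ by induction for (i), and for (ii) the fact that descendants sit on rows of their ancestors, so their heights above $y_0\in\ell_{j+1}\Z$ are bounded by a geometric series. In (iii) the horizontal overhang you worry about is actually zero. A child is built on the segment between two adjacent marked points of its parent, so no child is based at a rightmost marked point, and every child lies horizontally inside its parent; this gives $\mathrm{reg}\subseteq[0,s]\times[0,1.001\,\sigma_1]$ at once, as in the paper. Your first numerical chain in (iii) is false as written, but $s/(NM-1)$ is still a valid upper bound on the overhang, and your sharper tally does close the argument.
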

\begin{proof}
	By induction on $j$, every rectangle of level $j$ has its lower left corner in $\ell_j\Z\times\ell_j\Z$. This holds for $j=1$, and if it holds for $j$ then the marked points of a rectangle of level $j$ lie in $\ell_{j+1}\Z\times\ell_{j+1}\Z$, since $\ell_j=NM\ell_{j+1}$. Part (i) follows, because the $i$th square of a rectangle with lower left corner $(x_0,y_0)$ has $k=x_0/\sigma_j+i-1$, and $x_0/\sigma_j\in N\Z$ is even.
	
	Each rectangle of level $j+1$ lies horizontally within its parent and sits on one of its rows, and it has height $\sigma_{j+1}$. Therefore a rectangle $R$ of level $j+1$, together with all of the rectangles descended from it, lies within the horizontal extent of $R$, between the line containing the bottom side of $R$ and the line at height
	\[
	\sigma_{j+1}\left(1+(NM)^{-1}+(NM)^{-2}+\cdots\right)\le 1.001\,\sigma_{j+1}
	\]
	above it, using $NM\ge 2000$. Since the bottom side of $R$ lies on a line $x_2\in\ell_{j+1}\Z$, and every rectangle of level $j'>j$ is either such an $R$ or descended from one, this gives (ii). In the same way $\mr{reg}\subseteq[0,s]\times[0,1.001\,\sigma_1]$, and $\sigma_1=s/N\le s/1000$, which gives (iii).
\end{proof}

\noindent\emph{Strip families for one configuration.} Keeping the notation above, suppose that for each $1\le j\le J$ we are given a strip family $\mr{St}_j=G(\bm{\omega},1/2,\gamma)$, and for each $1\le j<J$ a strip family $\mr{Bd}_{j+1}=G(\bm{\omega}',5/(2N),\gamma')$, where (with $e=(1,0)$ and $e^\perp=(0,1)$)
\begin{equation}\label{eqn.Tolerances}
\begin{aligned}
&\left|\bm{\omega}-\frac{e}{2\sigma_j}\right|\le\frac{\sigma_j}{250s}\cdot\frac{1}{2\sigma_j},\qquad\left|\gamma-\frac34\right|\le\frac{1}{1000N},\\
&\left|\bm{\omega}'-\frac{e^\perp}{\ell_{j+1}}\right|\le\frac{\ell_{j+1}}{500Ns}\cdot\frac{1}{\ell_{j+1}},\qquad|\gamma'|\le\frac{1}{1000N}.
\end{aligned}
\end{equation}
The \textit{stripes} $\mr{St}_j$ should be thought of as approximating the union of the interiors of the even-numbered squares of the rectangles of level $j$, and the \textit{bands} $\mr{Bd}_{j+1}$ as thin horizontal neighborhoods of the rows of these rectangles, which contain all of the rectangles of later levels. We write $\overline{\mr{Bd}}_{j+1}=\R^2\setminus\mr{Bd}^>_{j+1}$, where $\mr{Bd}^>_{j+1}=G^>(\bm{\omega}',5/(2N),\gamma')$, and we set $\overline{\mr{Bd}}_{J+1}=\emptyset$ and $\mr{Bd}^>_{J+1}=\R^2$. The \textit{pattern} associated with these families is
\begin{equation}\label{eqn.PatternDef}
E=\bigcup_{j=1}^J\left(\mr{Bd}_2\cap\cdots\cap\mr{Bd}_j\cap\mr{Bd}^>_{j+1}\cap\mr{St}_j\right),
\end{equation}
where for $j=1$ the empty intersection of bands is read as $\R^2$. Figure \ref{fig.Pattern} shows this set near a rectangle of level $1$, for small values of $M$ and $N$.

\begin{figure}[h]
\caption{Schematic of the pattern $E$ near a rectangle of level $1$, drawn with $N=8, M=3,$ and $J=3$ (shaded regions lie in $E$, and dashed lines show squares of the configuration). Top: away from the bands $\mr{Bd}_2$, the stripes $\mr{St}_1$ fill the even-numbered squares, and inside the bands they are replaced by the finer stripes $\mr{St}_2$. Bottom: an enlargement of the boxed region, showing part of one band, where the stripes $\mr{St}_2$ are in turn replaced by $\mr{St}_3$ inside the thinner bands $\mr{Bd}_3$.\\}\label{fig.Pattern}
\centering
\includegraphics[width=0.9\textwidth]{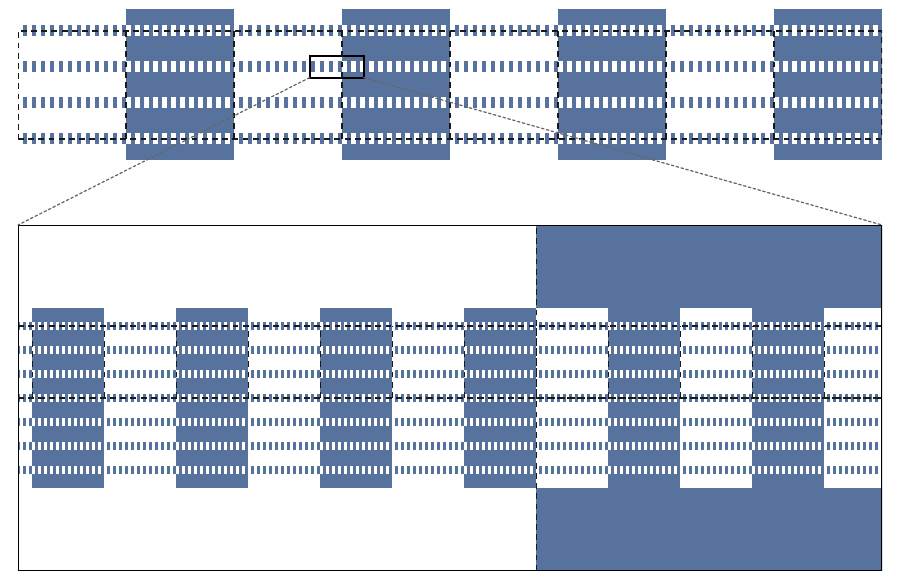}
\end{figure}

\begin{lemma}\label{lem.Pattern}
With the notation above, let $S$ be the $i$th square of a rectangle of level $j$ in $s\,\mc{C}(M,N,J)$, and let $\sigma=\sigma_j$.
\begin{itemize}[itemsep=5bp,parsep=5bp, topsep=0bp]
\item[(i)] Let $S^\circ\subseteq S$ be the set of points of $S$ at distance at least $\sigma/100$ from both vertical sides of $S$, and let $\mr{Sl}\subseteq S$ be the union of the two closed rectangles of width $\sigma/100$ along the vertical sides of $S$. If $i$ is even then $S^\circ\subseteq\mr{St}_j$, and if $i$ is odd then $S\cap\mr{St}_j\subseteq\mr{Sl}$.
\item[(ii)] We have that $S\subseteq\mr{Bd}_2\cap\cdots\cap\mr{Bd}_j$, and, if $j<J$, then $S\cap\overline{\mr{Bd}}_{j+1}$ is contained in a union $\mr{Bnd}$ of at most $M+1$ closed rectangles, each of width $\sigma$ and height between $1.26\,\sigma_{j+1}$ and $2.52\,\sigma_{j+1}$, of total area at most $6|S|/N$.
\item[(iii)] We have that $E\cap S\setminus\overline{\mr{Bd}}_{j+1}=\mr{St}_j\cap S\setminus\overline{\mr{Bd}}_{j+1}$.
\end{itemize}
\end{lemma}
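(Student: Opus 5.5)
The plan is to treat each strip family on the bounded region $\mr{reg}$ as a small perturbation of an exact strip family with normal $e/(2\sigma_j)$ or $e^\perp/\ell_{j+1}$. Everything follows from the tolerances \eqref{eqn.Tolerances} and Lemma \ref{lem.Config}.

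\emph{Linearisation.} For $x\in\mr{reg}$, Lemma \ref{lem.Config}(iii) gives $|x|\le 1.001\,s$. Combined with \eqref{eqn.Tolerances}, this yields
\[
\Big|\langle\bm{\omega},x\rangle-\frac{x_1}{2\sigma_j}\Big|\le\frac{1.001}{500}<0.0021,\qquad\Big|\langle\bm{\omega}',x\rangle-\frac{x_2}{\ell_{j+1}}\Big|\le\frac{1.001}{500N}.
\]
The phases also satisfy $|\gamma-3/4|,|\gamma'|\le 1/(1000N)$. So, up to an error below $0.0032$, membership in $\mr{St}_j$ means that $x_1/(2\sigma_j)$ is within $1/4$ of $3/4$ modulo $1$. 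Up to an error below $0.004/N$, membership in $\mr{Bd}_{j+1}$ means that $x_2/\ell_{j+1}$ is within $5/(4N)$ of $\Z$.

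\emph{Part (i).} By Lemma \ref{lem.Config}(i), $S=[k\sigma,(k+1)\sigma]\times[y,y+\sigma]$ with $i\equiv k+1$ modulo $2$, so $x_1/(2\sigma)$ ranges over $[k/2,(k+1)/2]$.

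If $i$ is even then $k$ is odd, and on $S^\circ$ the fractional part of $x_1/(2\sigma)$ lies in $[1/2+1/200,\,1-1/200]$. Since $1/200$ exceeds the error $0.0032$, these points lie in $\mr{St}_j$.

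If $i$ is odd then $k$ is even, and the fractional part of $x_1/(2\sigma)$ lies in $[0,1/2]$. Such a point can lie in $\mr{St}_j$ only if that fractional part is within about $0.0032$ of $0$ or of $1/2$. That means $x_1$ is within $0.0064\,\sigma<\sigma/100$ of a vertical side, so the point lies in $\mr{Sl}$.

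\emph{Part (ii).} For the first claim, take $2\le j'\le j$. The rectangle containing $S$ has level $j>j'-1$, so Lemma \ref{lem.Config}(ii) shows that every $x\in S$ has $\mr{dist}(x_2,\ell_{j'}\Z)\le 1.001\,\sigma_{j'}=1.001\,\ell_{j'}/N$. After the perturbation this gives $\|\langle\bm{\omega}',x\rangle-\gamma'\|<1.006/N<5/(4N)$, so $x\in\mr{Bd}_{j'}$.

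For the second claim, a point of $\overline{\mr{Bd}}_{j+1}$ in $\mr{reg}$ satisfies $\|\langle\bm{\omega}',x\rangle-\gamma'\|\le 5/(4N)$. This forces $\mr{dist}(x_2,\ell_{j+1}\Z)\le(1.25+0.003)\,\ell_{j+1}/N<1.26\,\sigma_{j+1}$. The vertical extent of $S$ is $[y,y+\sigma]$, with $y\in\ell_j\Z\subseteq\ell_{j+1}\Z$ and $\sigma=M\ell_{j+1}$, so exactly $M+1$ lines of $\ell_{j+1}\Z$ meet it. Around each of these lines we take the horizontal strip of half-height $1.26\,\sigma_{j+1}$ and intersect it with $S$. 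This produces $M+1$ rectangles of width $\sigma$; the two at the top and bottom edges are clipped to height $1.26\,\sigma_{j+1}$, and the others have height $2.52\,\sigma_{j+1}$. Their total area is at most
\[
(M+1)\,\sigma\cdot 2.52\,\frac{\sigma}{NM}\le\frac{3.78\,\sigma^2}{N}\le\frac{6|S|}{N},
\]
using $M\ge 2$.

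\emph{Part (iii).} Let $x\in S\setminus\overline{\mr{Bd}}_{j+1}$, so that $x\in\mr{Bd}^>_{j+1}$ (this is automatic when $j=J$). We check each term $j''$ of \eqref{eqn.PatternDef} in turn.
\begin{itemize}
\item The term $j''=j$ reduces to $\mr{St}_j$, by the first claim of (ii).
\item For $j''<j$, the term requires $x\in\mr{Bd}^>_{j''+1}$. But $x\in\mr{Bd}_{j''+1}$ by (ii), and these two sets are disjoint.
\item For $j''>j$, the term requires $x\in\mr{Bd}_{j+1}\subseteq\overline{\mr{Bd}}_{j+1}$, which is excluded.
\end{itemize}
So $E\cap S\setminus\overline{\mr{Bd}}_{j+1}=\mr{St}_j\cap S\setminus\overline{\mr{Bd}}_{j+1}$.

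The main obstacle is the bookkeeping in (ii): the constant $1.26$ must absorb both the band half-width $5/(4N)$ and the linearisation and phase errors, and the rectangles at the top and bottom edges of $S$ must be accounted for correctly.
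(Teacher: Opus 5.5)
Your proposal is correct and follows the paper's proof essentially step by step. You use the same linearisation on $\mr{reg}$ via Lemma \ref{lem.Config}(iii), the same parity argument for (i), Lemma \ref{lem.Config}(ii) for the first claim of (ii), the $M+1$ lines $x_2=y+q\ell_{j+1}$ to build $\mr{Bnd}$, and the same term-by-term check of \eqref{eqn.PatternDef} for (iii).

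Two small points, neither affecting correctness. You leave implicit that a point of $S$ within $1.26\,\sigma_{j+1}$ of $\ell_{j+1}\Z$ must lie near one of those $M+1$ lines. This holds because the neighbouring lines are at distance $\ell_{j+1}=N\sigma_{j+1}$ from $S$. Also, your constant $0.003$ is really $0.003002$, which still keeps the bound below $1.26\,\sigma_{j+1}$.
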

\begin{proof}
By Lemma \ref{lem.Config}(iii), all points $x$ under consideration satisfy $|x|\le 1.001\,s$. For such $x$, the first condition in \eqref{eqn.Tolerances} gives $|\langle\bm{\omega},x\rangle-x_1/(2\sigma_j)|\le 1.001/500$, and together with the condition on $\gamma$ this shows that $\langle\bm{\omega},x\rangle-\gamma$ differs from $x_1/(2\sigma_j)-3/4$ by less than $1/200$. The ideal condition $\|x_1/(2\sigma_j)-3/4\|<1/4$ holds exactly when $x_1$ lies in the interior of an interval $[k\sigma_j,(k+1)\sigma_j]$ with $k$ odd, and if $x_1$ is at distance at least $\sigma_j/100$ from $\sigma_j\Z$ then $x_1/(2\sigma_j)-3/4$ is at distance at least $1/200$ from the boundary values $\pm 1/4+\Z$ of this condition. Therefore such points $x$ lie in $\mr{St}_j$ if and only if $k$ is odd. By Lemma \ref{lem.Config}(i), $k$ is odd exactly when $i$ is even, and this proves (i).

In the same way, the conditions on $\bm{\omega}'$ and $\gamma'$ show that, for $|x|\le 1.001\,s$, the number $\langle\bm{\omega}',x\rangle-\gamma'$ differs from $x_2/\ell_{j+1}$ by less than $1/(100N)$. Since $\ell_{j+1}/N=\sigma_{j+1}$, it follows that points of $\mr{reg}$ with $\mr{dist}(x_2,\ell_{j+1}\Z)<1.24\,\sigma_{j+1}$ lie in $\mr{Bd}_{j+1}$, and that points of $\mr{reg}$ with $\mr{dist}(x_2,\ell_{j+1}\Z)>1.26\,\sigma_{j+1}$ lie in $\mr{Bd}^>_{j+1}$. For $j'<j$, the square $S$ lies in a rectangle of level $j>j'$, so by Lemma \ref{lem.Config}(ii) it lies in $\mr{Bd}_{j'+1}$. This proves the first part of (ii). For the second part, by Lemma \ref{lem.Config}(i) the bottom side of $S$ lies on a line $x_2=y$ with $y\in\ell_j\Z$, and $S$ has height $\sigma_j=M\ell_{j+1}$. So the lines $x_2\in\ell_{j+1}\Z$ which come within $1.26\,\sigma_{j+1}$ of $S$ are the $M+1$ lines $x_2=y+q\ell_{j+1}$, $0\le q\le M$ (the next lines are at distance $\ell_{j+1}=N\sigma_{j+1}$ from $S$). The points of $S$ within $1.26\,\sigma_{j+1}$ of these lines form $M+1$ rectangles of the stated size, and their total area is at most $(M+1)\cdot 2.52\,\sigma_{j+1}\sigma_j\le 6|S|/N$, using $\sigma_{j+1}=\sigma_j/(NM)$.

For (iii), consider the terms in \eqref{eqn.PatternDef}. If $j'<j$, the term with index $j'$ lies in $\mr{Bd}^>_{j'+1}$, which is disjoint from $\mr{Bd}_{j'+1}\supseteq S$. If $j'>j$, the term with index $j'$ lies in $\mr{Bd}_{j+1}\subseteq\overline{\mr{Bd}}_{j+1}$. Finally, on $S\setminus\overline{\mr{Bd}}_{j+1}$ we are inside $\mr{Bd}^>_{j+1}$ and, by (ii), inside $\mr{Bd}_2\cap\cdots\cap\mr{Bd}_j$, so the term with index $j$ coincides there with $\mr{St}_j$.
\end{proof}

\noindent\emph{The construction.} For each integer $L\ge 1$ we choose, in turn, the following data. We put
\[
a_L=\frac{400^{-L}}{4},\qquad c_L=\frac{a_L}{2},\qquad\tau_L=\frac{a_L}{100}.
\]
Let $M_L$, $\beta_L$, and $N_{0,L}$ be given by Lemma \ref{lem.Stretch} for $L$ and $c_L$ and note that, by the proof of that lemma, $M_L\ge 2$. Let $N_L$ be the smallest even integer which is at least $\max(N_{0,L},1000)$, and let $J_L$ be the smallest integer with $(1+\beta_L)^{J_L}>L^2$. Let $s_{*,L}$ be given by Proposition \ref{prop.Finite} for $L,c_L,N_L,J_L,$ and $b=L$. All of these quantities depend only on $L$.

The piece of the window for $L$ will be built from $2J_L-1$ stages, in the sense of Section \ref{sec.Counting}, one for each of the strip families $\mr{St}_1,\mr{Bd}_2,\mr{St}_2,\ldots,\mr{Bd}_{J_L},\mr{St}_{J_L}$ of a pattern \eqref{eqn.PatternDef} with $J=J_L$. We refer to this list of stages as the \textit{hierarchy} $L$. We index the stages of all of the hierarchies by a single integer $v\ge 1$, listing first the stages of hierarchy $1$, then those of hierarchy $2$, and so on, and we write $Q_v$ for the modulus of stage $v$. The moduli will be chosen one at a time, by Lemma \ref{lem.Prescribe}, so when we come to choose the data for hierarchy $L$, the moduli of the stages of hierarchies $1,\ldots,L-1$ are already known. With this in mind, let
\[
\nu_L=1+\sum_{L'<L}\Big(2+\sum_{v}2Q_v\Big),
\]
where the inner sum is over the indices $v$ of the stages of hierarchy $L'$. This number will bound the number of arcs which make up the pieces of the window for the hierarchies $L'<L$, as we will see in the counting argument below.

Let $r_L$ be given by Lemma \ref{lem.RowUnif} for $\alpha_1$, $\epsilon=\tau_L$, and $\nu=\nu_L$. Choose
\[
s_L\ge\max\left(s_{*,L},~200\,r_L\,N_L(N_LM_L)^{J_L-1}\right),
\]
so that the squares of the finest level of $s_L\mc{C}(M_L,N_L,J_L)$ have side length $\sigma_{J_L}\ge 200\,r_L$. With $s=s_L$ and the corresponding numbers $\ell_j$ and $\sigma_j$, we assign to each stage $v$ of hierarchy $L$ a width $w_v$, together with a vector $\bm{t}_v$ and a number $\epsilon_v$ for use in Lemma \ref{lem.Prescribe}. For the stage corresponding to $\mr{St}_j$ we take
\[
w_v=\frac12,\qquad\bm{t}_v=\frac{e}{2\sigma_j},\qquad\epsilon_v=\frac{\sigma_j}{250s_L},
\]
and for the stage corresponding to $\mr{Bd}_{j+1}$ we take
\[
w_v=\frac{5}{2N_L},\qquad\bm{t}_v=\frac{e^\perp}{\ell_{j+1}},\qquad\epsilon_v=\frac{\ell_{j+1}}{500N_Ls_L}.
\]
The construction alternates between the choices above and the steps of Lemma \ref{lem.Prescribe}: once the data for hierarchy $L$ have been chosen, we carry out the steps of Lemma \ref{lem.Prescribe} for the stages of hierarchy $L$, which determines their moduli, and then we proceed to hierarchy $L+1$. For these stages $\epsilon_v|\bm{t}_v|$ is equal to $1/(500s_L)$ or $1/(500N_Ls_L)$, which is less than $1$, and the data of hierarchy $L$ depend only on the moduli of the earlier hierarchies, so the lemma applies. Lemma \ref{lem.Prescribe} then gives a number $\alpha_2$, with $1,\alpha_1,$ and $\alpha_2$ linearly independent over $\Q$, such that, for each $v$, the vector $\bm{\omega}_v=Q_v\balph-\bm{\mu}_v$ satisfies the condition on $\bm{\omega}$ or $\bm{\omega}'$ in \eqref{eqn.Tolerances}.

Now that $\balph$ is fixed, let $T=\{t_m:m\in\Z^2\}$, which is a countable subset of $\R/\Z$. For each stage $v$ of hierarchy $L$, with width $w_v$ and reference phase $\gamma^0_v$ (equal to $3/4$ for stripes and $0$ for bands), choose a phase $\gamma_v$ with $|\gamma_v-\gamma^0_v|\le 1/(1000N_L)$ and with
\begin{equation}\label{eqn.PhaseAvoid}
\gamma_v\notin\left\{\langle\bm{\omega}_v,m\rangle\pm w_v/2:m\in\Z^2\right\}+\Z.
\end{equation}
This is possible because the excluded set is countable. Then choose $p\in\R$ such that none of the points $p$, $p+400^{-L}/2$, and $p+400^{-L}/2+a_L$, for $L\ge 1$, lies in $T$ (modulo $1$), which is possible for the same reason. Finally, define arcs
\[
I_L=\left(p,p+400^{-L}\right]\quad\text{and}\quad A_L=\left(p+\frac{400^{-L}}{2},~p+\frac{400^{-L}}{2}+a_L\right)
\]
(as subsets of $\R/\Z$), let $U_v=U(Q_v,w_v,\gamma_v)$ and $U^>_v=U^>(Q_v,w_v,\gamma_v)$, let $H_L\subseteq\R/\Z$ be the set obtained from the stages of hierarchy $L$ by the same operations as in \eqref{eqn.PatternDef}, and let
\[
W^\circ=\bigcup_{L\ge 1}\left(A_L\cap H_L\right).
\]
We define $W\subseteq\R$ to be the closure of the lift of $W^\circ$ to the interval $(p,p+1/400)$.

Note that $A_L\subseteq I_L\setminus I_{L+1}$, so the arcs $A_L$ are pairwise disjoint, $A_L$ is disjoint from $I_{L+1}$, and $|I_{L+1}|=400^{-L-1}=a_L/100$. Also, by Lemma \ref{lem.Stage}, for each $L$ the set $E_L$ defined by \eqref{eqn.PatternDef} from the strip families $G(\bm{\omega}_v,w_v,\gamma_v)$ of hierarchy $L$ satisfies
\begin{equation}\label{eqn.PieceToPattern}
\{m\in\Z^2:t_m\in A_L\cap H_L\}=\{m\in\Z^2\cap E_L:t_m\in A_L\}.
\end{equation}
By our choice of phases and Lemma \ref{lem.Prescribe}, the strip families of hierarchy $L$ satisfy \eqref{eqn.Tolerances} with $s=s_L$, $M=M_L$, $N=N_L$, and $J=J_L$, so Lemma \ref{lem.Pattern} applies to them.

\noindent\emph{Properties of the window.} Each set $A_L\cap H_L$ is open, and it is a finite union of open arcs, since it is obtained from finitely many arcs by finite intersections and unions. Its boundary lies in the union of $\partial A_L$ and the boundaries of the stages of hierarchy $L$, which are finite sets. Now let $x\in\overline{W^\circ}\setminus W^\circ$, with $x\not=p$. The arcs $A_L$ accumulate only at $p$, so $x$ has a neighborhood which meets only finitely many of them, and therefore $x$ lies in the boundary of one of the sets $A_L\cap H_L$. It follows that $\partial W^\circ$ is contained in the countable set consisting of $p$, the endpoints of the arcs $A_L$, and the boundary points of the stages. Since $W^\circ$ is open, we have that $\partial\overline{W^\circ}\subseteq\partial W^\circ$, so the boundary of $W$ is countable. The set $W$ is compact, it lies in an interval of length less than $1$, and it is the closure of its interior, because $W^\circ$ (lifted) is an open set with closure $W$. That each of the sets $A_L\cap H_L$ is nonempty, and hence that $W$ is nonempty and has infinitely many connected components, will follow from the counts below.

\noindent\emph{Proof of part (i) of Theorem \ref{thm.Plane}.} We show that $T$ does not meet $\partial W^\circ$, which contains the image of $\partial W$ in $\R/\Z$. By the choice of $p$, the set $T$ contains neither $p$ nor any endpoint of any arc $A_L$. A boundary point of the stage $U_v$ is a point $x$ with $Q_vx\in\gamma_v\pm w_v/2+\Z$. If $x=t_m$, then, since $Q_v\langle\balph,m\rangle=\langle\bm{\mu}_v,m\rangle+\langle\bm{\omega}_v,m\rangle$ with $\langle\bm{\mu}_v,m\rangle\in\Z$, we would have that $\gamma_v\in\langle\bm{\omega}_v,m\rangle\mp w_v/2+\Z$, contradicting \eqref{eqn.PhaseAvoid}. This proves part (i). It also shows that
\begin{equation}\label{eqn.LambdaWcirc}
\Lambda_\balph(W)=\{m\in\Z^2:t_m\in W^\circ\}.
\end{equation}

\noindent\emph{Counting points in the squares of one configuration.} Fix $L$, write $a=a_L$, $\tau=\tau_L$, $r=r_L$, and $\Lambda=\Lambda_\balph(W)$, and let \[U^{\mr{bg}}=\bigcup_{L'<L}(A_{L'}\cap H_{L'}).\]
We claim that every square $S$ of a rectangle of level $j$ in $s_L\mc{C}(M_L,N_L,J_L)$ satisfies
\begin{equation}\label{eqn.SquareCount}
\#(\Lambda\cap S)=|U^{\mr{bg}}||S|+\chi_S\,a|S|+e_S,\quad\text{with}\quad|e_S|\le 0.07\,a|S|,
\end{equation}
where $\chi_S=1$ if $S$ is an even-numbered square of its rectangle and $\chi_S=0$ otherwise.

To see this, note first that the pieces $A_{L'}\cap H_{L'}$ with $L'<L$, the piece $A_L\cap H_L$, and the pieces with $L''>L$ lie in the pairwise disjoint sets $U^{\mr{bg}}$, $A_L$, and $I_{L+1}$, respectively. So by \eqref{eqn.LambdaWcirc} and \eqref{eqn.PieceToPattern}, $\#(\Lambda\cap S)=X_1+X_2+X_3$, where
\[
X_1=\#\{m\in\Z^2\cap S:t_m\in U^{\mr{bg}}\},\quad X_2=\#\{m\in\Z^2\cap S\cap E_L:t_m\in A_L\},
\]
and
\[0\le X_3\le\#\{m\in\Z^2\cap S:t_m\in I_{L+1}\}.\] All of the rectangles to which we apply Lemma \ref{lem.RowUnif} below are compact and convex, and contain a closed disc of radius $r$: this holds for $S$ and $S^\circ$, which have sides at least $0.98\,\sigma_j$; for the two rectangles making up $\mr{Sl}$, which have width $\sigma_j/100\ge 2r$; and for the rectangles making up $\mr{Bnd}$, which have height at least $1.26\,\sigma_{j+1}\ge 2r$ (when $j<J_L$). Here we have used that $\sigma_J\ge 200\,r$.

The set $U^{\mr{bg}}$ is open and is not equal to $\R/\Z$, and its boundary lies in the union of the boundaries of the arcs $A_{L'}$ and of the stages of the hierarchies $L'<L$. Each arc $A_{L'}$ has $2$ boundary points, and each stage or complementary stage of modulus $Q_v$ has $2Q_v$, so by the definition of $\nu_L$ the boundary of $U^{\mr{bg}}$ has fewer than $\nu_L$ points. It follows that $U^{\mr{bg}}$ is a union of at most $\nu_L$ arcs. Therefore, by Lemma \ref{lem.RowUnif} with $\epsilon=\tau$,
\[
X_1=|U^{\mr{bg}}||S|\pm\tau|S|\quad\text{and}\quad X_3\le\left(|I_{L+1}|+\tau\right)|S|=\frac{a}{50}|S|.
\]
For $X_2$, write
\[\mr{cnt}(R)=\#\{m\in\Z^2\cap R:t_m\in A_L\}.\]
If $R$ is one of the rectangles $S$ or $S^\circ$, or one of the rectangles making up $\mr{Sl}$ or $\mr{Bnd}$, then, since $A_L$ is a single arc, Lemma \ref{lem.RowUnif} gives $\mr{cnt}(R)=a|R|\pm\tau|R|$. By Lemma \ref{lem.Pattern}(iii), $X_2$ and $\mr{cnt}(S\cap\mr{St}_j)$ count the same points outside $\overline{\mr{Bd}}_{j+1}$, so they differ by at most $\mr{cnt}(\mr{Bnd})$, which by Lemma \ref{lem.Pattern}(ii) is at most $(a+\tau)\cdot 6|S|/N_L\le 0.007\,a|S|$ (and is $0$ if $j=J_L$). If $S$ is even-numbered then, by Lemma \ref{lem.Pattern}(i), $\mr{cnt}(S^\circ)\le\mr{cnt}(S\cap\mr{St}_j)\le\mr{cnt}(S)$, and $|S^\circ|=0.98|S|$, so $\mr{cnt}(S\cap\mr{St}_j)=a|S|\pm(0.02a+\tau)|S|$. If $S$ is odd-numbered then $0\le\mr{cnt}(S\cap\mr{St}_j)\le\mr{cnt}(\mr{Sl})\le(a+\tau)|S|/50\le 0.03\,a|S|$. In either case $X_2=\chi_Sa|S|\pm 0.037\,a|S|$. Adding up the three estimates, and using $\tau=a/100$, proves \eqref{eqn.SquareCount}.

\noindent\emph{Proof of part (ii) of Theorem \ref{thm.Plane}.} First we record the nonemptiness properties mentioned above. Fix $L$, and let $S$ be an even-numbered square of a rectangle of level $1$ in $s_L\mc{C}(M_L,N_L,J_L)$. By the estimate $X_2=a|S|\pm 0.037\,a|S|$ in the proof of \eqref{eqn.SquareCount}, there is an $m\in\Z^2\cap S$ with $t_m\in A_L\cap H_L$, so the set $A_L\cap H_L$ is nonempty. Since the arcs $A_L$ are pairwise disjoint, and each of them is at positive distance from the union of the others, it follows that $W$ has infinitely many connected components. It also follows that $W^\circ$ is not empty, and in particular that $W$ contains an interval $I$ of positive length. Applying Lemma \ref{lem.RowCount} with $U$ equal to the image of $I$ in $\R/\Z$ and $\epsilon=|I|/2$, we find an $r_0>0$ such that every closed disc of radius $r_0$ in $\R^2$ contains a point of $\Lambda_\balph(I)$, and therefore a point of $\Lambda$. So $\Lambda$ is relatively dense, and we let $b_\Lambda$ be its covering radius.

Suppose, by way of contradiction, that there is an $L_0$-BL bijection $g:\Lambda\rar\Z^2$, and choose an integer $L\ge\max(L_0,b_\Lambda)$. Then $g$ is $L$-BL, and $\Lambda$ has covering radius at most $L$. Two adjacent squares $S$ and $S'$ of a rectangle in $s_L\mc{C}(M_L,N_L,J_L)$ have the same area and different parities, so by \eqref{eqn.SquareCount},
\[
\left|\#(\Lambda\cap S')-\#(\Lambda\cap S)\right|\ge a_L|S|-0.14\,a_L|S|>c_L|S|.
\]
Since $s_L\ge s_{*,L}$, Proposition \ref{prop.Finite}, applied with $x=0$, $s=s_L$, and $b=L$, shows that there is no $L$-BL bijection from $\Lambda$ to $\Z^2$. This contradiction proves part (ii) of Theorem \ref{thm.Plane}. Together with the properties of $W$ established above, and the linear independence of $1,\alpha_1,$ and $\alpha_2$, this completes the proof of Theorem \ref{thm.Plane}, and therefore also the proofs of Theorem \ref{thm.Main} and Corollary \ref{cor.Encode}.

We conclude with a remark. The construction gives each hierarchy a fixed contrast $c_L=a_L/2$, which tends to $0$ as $L\rar\infty$. This is unavoidable: the set $\Lambda$ has uniform density by Lemma \ref{lem.RowCount}, so for any fixed $c>0$ the count gaps in \eqref{eqn.CountGap} cannot hold at arbitrarily large scales. The price is that the depth $J_L$ of the configurations grows rapidly with $L$, since $\beta_L$ is of order $c_L^3L^{-6}$ (see the proof of Lemma \ref{lem.Stretch}).

\vspace{.15in}
{\footnotesize
\noindent
Department of Mathematics\\
University of Houston\\
Houston, TX, United States\\
haynes@math.uh.edu
}

\end{document}